\theoremstyle{plain} 
\newtheorem{prop}{Proposition}[section]
\newtheorem{lemma}{Lemma}[section]
\newtheorem{thrm}{Theorem}[section]
\theoremstyle{definition} 
\newtheorem{mydef}{Definition}[section]
\newtheorem{assumption}{Assumption}
\theoremstyle{remark} 
\newtheorem{remark}{Remark}[section]
\numberwithin{equation}{section}
\title{Random walks in random conductances: decoupling and spread of infection}
\author{Peter Gracar\footnote{Department of Mathematical Sciences, University of Bath, UK. {\tt p.gracar@bath.ac.uk}}\; and Alexandre Stauffer\footnote{Department of Mathematical Sciences, University of Bath, UK, {\tt a.stauffer@bath.ac.uk}. Supported by a Marie Curie Career Integration Grant PCIG13-GA-2013-618588 DSRELIS, and an EPSRC Early Career Fellowship.}}
\date{}
\begin{document}
\maketitle

\begin{abstract}
Let \((G,\mu)\) be a \emph{uniformly elliptic} random conductance graph on \(\mathbb{Z}^d\) with a Poisson point process of particles at time \(t=0\) that perform independent simple random walks. We show that inside a cube \(Q_K\) of side length \(K\), if all subcubes of side length \(\ell<K\) inside \(Q_K\) have sufficiently many particles, the particles return to stationarity after \(c\ell^2\) time with a probability close to \(1\). We also show this result for percolation clusters on locally finite graphs. Using this mixing result and the results of \cite{Gracar2016a}, we show that in this setup, an infection spreads with positive speed in any direction. Our framework is robust enough to allow us to also extend the result to infection with recovery, where we show positive speed and that the infection survives indefinitely with positive probability.\\ \\
\textit{Keywords and phrases:} mixing, decoupling, spread of infection, heat kernel
\end{abstract}

\section{Introduction}\label{sect:intro}

We consider the graph \(G=(\mathbb{Z}^d,E)\), \(d\geq 2\) to be the \(d\)-dimensional square lattice, with edges between nearest neighbors: for \(x,y\in\mathbb{Z}^d\) we have \((x,y)\in E\) iff \(\|x-y\|_1=1\). Let \(\{\mu_{x,y}\}_{(x,y)\in E}\) be a collection of i.i.d.\ non-negative weights, which we call \emph{conductances}. In this paper, edges will always be undirected, so \(\mu_{x,y}=\mu_{y,x}\) for all \((x,y)\in E\). We also assume that the conductances are uniformly elliptic: that is,
\begin{equation}\label{eq:mu_bounds_new}
	\textrm{there exists \(C_M>0\), such that }\mu_{x,y}\in[C_M^{-1},C_M]\textrm{ for all }(x,y)\in E.
\end{equation}

We say \(x\sim y\) if \((x,y)\in E\) and define \(\mu_x=\sum_{y\sim x}\mu_{x,y}\). At time \(0\), consider a Poisson point process of particles on \(\mathbb{Z}^d\), with intensity measure \(\lambda(x)=\lambda_0\mu_x\) for some constant \(\lambda_0>0\) and all \(x\in\mathbb{Z}^d\). That is, for each \(x\in\mathbb{Z}^d\), the number of particles at \(x\) at time \(0\) is an independent Poisson random variable of mean \(\lambda_0\mu_x\). Then, let the particles perform independent continuous-time simple random walks on the weighted graph so that a particle at \(x\in\mathbb{Z}^d\) jumps to a neighbor \(y\sim x\) at rate \(\frac{\mu_{x,y}}{\mu_x}\). It follows from the thinning property of Poisson random variables that the system of particles is in stationarity; thus, at any time \(t\), the particles are distributed according to a Poisson point process with intensity measure \(\lambda\).

We study the spread of an infection among the particles. Assume that at time \(0\) there is an infected particle at the origin, and all other particles are uninfected. Then an uninfected particle gets infected as soon as it shares a site with an infected particle. Our first result establishes that the infection spreads with positive speed.

\begin{thrm}\label{thrm:spread}
	Let \(\{\mu_{x,y}\}_{(x,y)\in E}\) be i.i.d.\ satisfying (\ref{eq:mu_bounds_new}). For any time \(t\geq 0\), let \(I_t\) be the position of the infected particle that is furthest away from the origin. Then
	\[
		\liminf_{t\rightarrow\infty}\frac{\|I_t\|_1}{t}>0\quad \textrm{almost surely}.
	\]
\end{thrm}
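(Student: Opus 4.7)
The plan is to follow a multi-scale renormalization strategy, combining the mixing statement described in the abstract with the decoupling/percolation machinery of \cite{Gracar2016a}. The idea is to discretize space-time into boxes at a suitably chosen scale $\ell$, declare a box \emph{good} if every sub-box of an appropriate smaller side length contains a sufficient density of particles at the relevant time, and then show that infection propagates deterministically through chains of good boxes at a positive rate.

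First I would fix a scale $\ell$, partition $\mathbb{Z}^d\times [0,\infty)$ into space-time boxes of side lengths $\ell$ and duration $c\ell^2$, and declare such a box good if all its sub-boxes of side $\ell^\alpha$ (for some $\alpha\in(0,1)$) have a density of particles comparable to $\lambda_0\mu_x$ on average. The mixing result from the abstract then ensures that, conditionally on goodness, after time $c\ell^2$ the particles in the box are distributed close to their stationary Poisson law, so that a single infected particle entering the box will, with high probability, collide with an uninfected particle and generate an infected particle near the boundary of any adjacent good box. In this way, goodness plus a single infected seed pushes the infection from one box to a neighbouring one within time $O(\ell^2)$, giving a local speed of order $\ell^{-1}$.

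Next I would combine two inputs to verify that the \emph{good} boxes percolate suitably: (i) the decoupling inequalities of this paper and \cite{Gracar2016a}, which let us treat spatially separated boxes as almost independent despite the common environment of conductances and the long-range correlations introduced by particles that migrate between boxes; and (ii) a concentration estimate showing that a single box is good with probability $1-\epsilon(\ell)$, where $\epsilon(\ell)\to 0$ as $\ell\to\infty$ (using uniform ellipticity to control the Poisson intensity $\lambda_0\mu_x$ from above and below, and standard Poisson concentration). With $\ell$ large enough, a Peierls-type / renormalization comparison as in \cite{Gracar2016a} stochastically dominates the good-box process by a supercritical oriented percolation on a coarse-grained lattice. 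Percolation in the good-box field then yields an infinite oriented cluster starting from the origin with positive probability, whose growth rate in the time direction is bounded away from zero, and a standard restart/ergodicity argument upgrades this to an almost-sure statement.

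Finally, the liminf lower bound on $\|I_t\|_1/t$ is obtained by observing that the coarse-grained oriented path guaranteed by the supercritical percolation travels a macroscopic distance $\Theta(\ell)$ in each time step $c\ell^2$, and infection propagates along it, so $I_t$ is at distance at least a positive constant times $t$ from the origin. The main obstacle I expect is step (i) of the second paragraph: controlling the joint law of goodness across many boxes in the presence of two sources of correlation (the quenched conductance field, and the random walks that connect boxes). This is exactly what the paper's new mixing-plus-decoupling toolkit is designed to handle, so the main technical work is reducing the proof of \Cref{thrm:spread} to a clean application of the mixing theorem inside a single box and of the decoupling inequalities between distant boxes; after that reduction, the oriented percolation comparison and the speed lower bound follow in the same way as in the i.i.d.\ setting of \cite{Gracar2016a}.
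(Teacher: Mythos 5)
Your proposal captures the broad flavor of the argument (discretize into space-time cells, exploit local mixing inside a cell, prove a local ``good cell'' event has high conditional probability, then percolate), but it diverges from the paper's actual proof in a way that creates genuine difficulties at exactly the places where the paper's machinery is designed to intervene. First, the paper does \emph{not} compare the good-cell field to oriented percolation, and neither does \cite{Gracar2016a}; the companion paper's main structural output (\Cref{thrm:exp_tail} here) is the existence of a finite two-sided \emph{Lipschitz surface} $F$ of cells on which the local event holds, and which almost surely surrounds the origin. This surface is what replaces the usual oriented percolation comparison. The reason it matters is that the surface is a closed barrier: any path out of the origin must cross $F$, so an infected particle started at the origin reaches a cell of $F$ in almost surely finite time, with no restart argument and no $0$--$1$ law step. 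Once on the surface, the Lipschitz property of $F_+$ and $F_-$ guarantees a deterministic choice of a neighboring cell of $F$ at the next time slab, so the infection propagates a spatial distance $\Theta(\ell)$ per time $\beta$ along a chain of cells of $F$, which gives the positive liminf speed directly. Your proposal instead leans on a ``standard restart/ergodicity argument'' to upgrade positive probability to almost-sure, and this is not standard here: because the infection and the environment both carry long-range temporal correlations, a failed percolation attempt is not independent of a later restart, and making that rigorous is essentially as hard as the original problem. The Lipschitz surface is precisely the device that sidesteps this.

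Second, your notion of ``good'' (every sub-box has near-stationary density) is weaker than what the paper uses. The event $E_{\mathrm{st}}(i,\tau)$ in \Cref{prop:event} is a composite \emph{increasing} event in the particle configuration: (i) a tagged infected particle started in the central cube stays inside the super cube for time $T=\ell^{5/3}$; (ii) during that time it collides with order $\ell^{1/3}$ other particles (\Cref{lemma:particles2}, which is a heat-kernel computation, not just a density statement); and (iii) at least one of those particles subsequently lands in \emph{each} of the $(2\eta+1)^d$ cubes of the super cube by time $(\tau+1)\beta$ (\Cref{lemma:particles1}). The density statement alone does not give (ii)--(iii); you need the quantitative heat kernel bounds and exit-time estimates of \Cref{sect:setup} to convert density into collisions and then into spread. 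Third, the role of the mixing/decoupling theorem (\Cref{thrm:mixing2}) is not to prove the local event directly but to feed into \Cref{thrm:exp_tail}: the local event is evaluated under a Poisson law of intensity $(1-\epsilon)\lambda$ conditioned on displacement control, and the Lipschitz-surface construction handles the dependencies across cells. If you replace the Lipschitz surface with an oriented-percolation comparison you would have to re-derive a multi-scale decoupling for the good-cell field from scratch, which is exactly the work the paper says it is avoiding by appealing to \cite{Gracar2016a}. So the missing ideas in your sketch are: the two-sided Lipschitz surface in place of oriented percolation plus restart; and the explicit construction of $E_{\mathrm{st}}$ as a collide-then-spread event rather than a density event.
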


The above result has been established on the square lattice (i.e., \(\mu_{x,y}=1\) for all \((x,y)\in E\)) by Kesten and Sidoravicius \cite{Kesten2005} via an intricate multi-scale analysis; see also \cite{Kesten2008} for a shape theorem. In a companion paper \cite{Gracar2016a}, we develop a framework which can be used to analyze processes on this setting without the need of carrying out a multi-scale analysis from scratch. We prove our \Cref{thrm:spread} via this framework, showing the applicability of our technique from \cite{Gracar2016a}. We also apply this technique to analyze the spread of an infection with recovery.
Let the setup be as before, but now each infected particle independently recovers and becomes uninfected at rate \(\gamma\) for some fixed parameter \(\gamma>0\). After recovering, a particle becomes again susceptible to the infection and gets infected again whenever it shares a site with an infected particle. Our next result shows that if \(\gamma\) is small enough, then with positive probability there will be at least one infected particle at all times. When this happens, we also obtain that the infection spreads with positive speed.

\begin{thrm}\label{thrm:spread_recov}
	Let \(\{\mu_{x,y}\}_{(x,y)\in E}\) be i.i.d.\ satisfying (\ref{eq:mu_bounds_new}). For any \(\lambda_0>0\), there exists \(\gamma_0>0\) such that, for all \(\gamma\in(0,\gamma_0)\), with positive probability, the infection does not die out. Furthermore, there are constants \(c_1,c_2>0\) such that
	\[
	\mathbb{P}[\|I_t\|_1\geq c_1t\textrm{ for all }t\geq0]\geq c_2,
	\]
	where \(I_t\) is the position of the infected particle that is furthest away from the origin at time~\(t\).
\end{thrm}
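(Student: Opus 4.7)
The plan is to reduce \Cref{thrm:spread_recov} to the multi-scale framework of \cite{Gracar2016a} that underlies \Cref{thrm:spread}, treating recovery at rate $\gamma$ as a small perturbation at a single base scale. Fix the base spatial scale $L_0$ and associated time scale $T_0 = C L_0^2$ coming from the mixing theorem of this paper, large enough that the base block event used in the proof of \Cref{thrm:spread}---uniform mixing in every $\ell$-subcube of a $Q_{L_0}$, sufficiently many particles in each subcube, and successful propagation of the no-recovery infection across the block, producing $\Theta(L_0^d)$ infected particles by time $T_0$---holds with probability at least $1-\epsilon$ for arbitrarily small $\epsilon$. Up to here the construction is identical to that of \Cref{thrm:spread}.

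Next I would couple the recovery dynamics to the no-recovery dynamics on a common probability space, equipping each particle with an independent Poisson clock of rate $\gamma$. In the no-recovery dynamics, the base block event implicitly identifies at most $N = \Theta(L_0^d)$ infection events (each with an infecter, an infectee, and a time) that together produce the $\Theta(L_0^d)$ infected particles. Run on the same trajectories, the recovery dynamics agrees with the no-recovery dynamics along any such chain provided no infecter has a recovery ring between its own infection time and the moment it transmits to the next particle; each such gap is at most $T_0$, so the per-event failure probability is at most $1-e^{-\gamma T_0}\le \gamma T_0$. A union bound over the $N$ events gives total failure probability at most $O(L_0^d\gamma T_0)$, which is $O(\epsilon)$ once one chooses $\gamma_0 = \epsilon/(L_0^d T_0)$ and takes $\gamma<\gamma_0$. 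Consequently the recovery analogue of the base block event holds with probability at least $1-O(\epsilon)$, depends only on the graph, particles and clocks inside a bounded space-time region, and feeds into the oriented percolation comparison of \cite{Gracar2016a}. For $\epsilon$ small enough, this comparison yields a supercritical percolating cluster of good blocks, delivering both positive-probability survival of the infection and the linear-speed bound $\|I_t\|_1\ge c_1 t$.

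The hard part will be making the chain-coupling argument precise. The proof of \Cref{thrm:spread} does not a priori single out a specific chain of collisions that produces the $\Theta(L_0^d)$ infections, so one must extract such a chain from the multi-scale construction of \cite{Gracar2016a} (for instance by exploring the infection tree in a fixed order and pruning to a bounded size). Moreover, although the recovery clocks are independent of the trajectories and of the conductance environment, the chain itself is measurable with respect to them, so the union bound must be executed with some care---either by conditioning on the realized chain and using independence of the clocks, or by bounding the maximum number of infection events inside a block and applying a uniform union bound. Reconciling this with the decoupling window provided by the mixing theorem of this paper, so as not to damage the inputs required by the framework of \cite{Gracar2016a}, is the technical crux.
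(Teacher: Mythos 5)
Your high-level strategy is the same as the paper's: modify the base block event to incorporate non-recovery, show its probability is still close to $1$ when $\gamma$ is small, and feed the modified event into the Lipschitz-surface / oriented-percolation framework of \cite{Gracar2016a}. The difference lies in how the modified block event is analyzed, and here the paper's route is substantially cleaner than yours.

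You propose to extract the chain of infection events inside a block, show that each link survives a recovery ring with probability at least $1-\gamma T_0$, and union-bound over $N$ links; you then flag the measurability and chain-extraction issues as ``the technical crux''. The paper avoids this entirely by using \emph{Poisson thinning}: since each particle rings its recovery clock during $[\tau\beta,(\tau+1)\beta]$ independently with probability $1-e^{-\gamma\beta}$, the subset of particles in the block that never recover during that interval is itself a Poisson point process, with intensity measure multiplied by $e^{-\gamma\beta}$. One then runs \Cref{lemma:particles2}, \Cref{lemma:particles1} and \Cref{prop:event} verbatim on this thinned process (plus the event that the single initial infected particle with path $\rho$ does not recover, which costs $1-e^{-\gamma\beta}$). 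The resulting bound,
\[
\mathbb{P}[E'_{\mathrm{st}}(i,\tau)]\ \geq\ 1-(1-e^{-\gamma\beta})-\exp\{-C\lambda_0 e^{-\gamma\beta}\ell^{1/3}\},
\]
requires no chain bookkeeping, no union bound over infection events, and no care about which particles become infected; the adaptivity you worry about (the chain being measurable w.r.t.\ trajectories and environment) never arises because the thinning is applied to the particle process \emph{before} any infection logic runs. Also note that the number of infecting collisions in a block is $\Theta(\ell^{1/3})$, not $\Theta(L_0^d)$, and there is only one generation of infections per block (the single seed $\rho$ collides with $\Theta(\ell^{1/3})$ particles, which then simply move to nearby cubes without further transmission), so there is no genuine ``chain'' inside a block to begin with; the inter-block chaining is what the Lipschitz surface handles. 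Your argument could likely be patched into a correct proof, but the thinning observation removes the technical crux you identified and gives the clean dependence $\gamma_0\sim 1/\beta$ instead of the more restrictive $\gamma_0\sim 1/(L_0^d T_0)$.
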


The challenge in this setup comes from the heavily dependent structure of the model. Though particles move independently of one another, dependencies do arise over time. For example, if a ball of radius \(R\) centered at some vertex \(x\) of the graph turns out to have no particles at time 0, then the ball \(B(x,R/2)\) of radius \(R/2\) centered at \(x\), will continue to be empty of particles up to time \(R^2\), with positive probability. This means that the probability that the \((d+1)\)-dimensional, space-time cylinder \(B(x,R/2)\times[0,R^2]\) has no particle is at least \(\exp\{-cR^d\}\) for some constant \(c\), which is just a stretched exponential in the volume of the cylinder.
On the other hand, one expects that, after time \(t\gg R^2\), the set of particles inside the ball will become ``close'' to stationarity.

To deal with dependences, one often resorts to a decoupling argument, showing that two local events behave roughly independently of each other, provided they are measurable according to regions in space time that are sufficiently far apart. We will obtain such an argument by extending a technique which we call \emph{local mixing}, and which was introduced in \cite{Sinclair2010}. The key observation is the following. Consider a cube \(Q\subseteq\mathbb{Z}^d\), tessellated into subcubes of side length \(\ell>0\). For simplicity assume for the moment that \(\mu_{x,y}=1\) for all \((x,y)\in E\). Suppose that at some time \(t\), the configuration of particles inside \(Q\) is dense enough, in the sense that inside each subcube there are at least \(c\ell^d\) particles, for some constant \(c>0\). Regardless of how the particles are distributed inside \(Q\), as long as the subcubes are dense, we obtain that at some time \(t+c'\ell^2\), not only particles had enough time to move out of the subcubes they were in at time \(t\), but also we obtain that the configuration of particles inside ``the core'' of \(Q\) (i.e., away from the boundary of \(Q\)) stochastically dominates a Poisson point process of intensity \((1-\epsilon)c\ell^d\) that is independent of the configuration of particles at time \(t\). Moreover, the value \(\epsilon\) can be made arbitrarily close to \(0\) by setting \(c'\) large enough. In words, we obtain a configuration at time \(t+c'\ell^2\) inside the core of \(Q\) that is roughly independent of the configuration at time \(t\), and is close to the stationary distribution. To the best of our knowledge, the idea of local mixing in such settings originated in the work of Sinclair and Stauffer \cite{Sinclair2010}, and was later applied in \cite{Peres2012,Stauffer2014}. This idea was then extended  with the introduction of soft local times by Popov and Teixeira \cite{Popov2012} (see also \cite{Hilario2014}), and applied to other processes, such as random interlacements.

Our second main goal in this paper is to show that this local mixing result can be obtained in a larger setting, in which a local CLT result, which plays a crucial role in the proof\footnote{The results of \cite{Sinclair2010,Peres2012} are in the setting of Brownian motions on \(\mathbb{R}^d\), but can be adapted in a straightforward way to random walks on \(\mathbb{Z}^d\) with \(\mu_{x,y}=1\) for all \((x,y)\in E\) by using the local CLT.} of \cite{Sinclair2010,Peres2012,Hilario2014}, might not hold or only holds in the limit as time goes to infinity, with no good control on the convergence rate. This is precisely the situation of our setting, where the weights \(\mu_{x,y}\) are not all identical to \(1\). To work around that, we will show that local mixing can be obtained whenever a so-called \emph{Parabolic Harnack Inequality} holds, and we have some good estimates on the displacement of random walks. 
 
For the result below, we can impose slightly weaker conditions on \(\mu_{x,y}\). Let \(p_c\) be the critical probability for bond percolation on \(\mathbb{Z}^d\). Assume that \(\mu_{x,y}\) are i.i.d.\ and that, for each \((x,y)\in E\), we have
\begin{equation}\label{eq:p_c_new}
	\mathbb{P}[\mu_{x,y}=0]< p_c\textrm{ and }\mu_{x,y}\textrm{ satisfies (\ref{eq:mu_bounds_new}) whenever }\mu_{x,y}>0.
\end{equation}

For two regions \(Q'\subseteq Q\subset\mathbb{Z}^d\), we say that \(Q'\) is \(x\) away from the boundary of \(Q\) iof the distance between \(Q'\) and \(Q^c\) is at least \(x\). 
\begin{thrm}\label{thrm:mixing_simple}
	Let \(\{\mu_{x,y}\}_{(x,y)\in E}\) be i.i.d.\ satisfying (\ref{eq:p_c_new}). There exist positive constants \(c_1\), \(c_2\), \(c_3\), \(c_4\), \(c_5\) such that the following holds.
	Fix \(K>\ell>0\) and \(\epsilon\in(0,1)\). Consider a cube \(Q\) of side-length \(K\), tessellated into subcubes \((T_i)_i\) of side length \(\ell\).
	Assume each subcube \(T_i\) contains at least \(\beta\sum_{x\in T_i}\mu_x\) particles for some \(\beta>0\), and let \(\Delta\geq c_1\ell^2\epsilon^{-c_2}\).
	If \(\ell\) is large enough, then after the particles move for time \(\Delta\), we obtain that within a region \(Q'\subseteq Q\) that is at least \(c_3\ell\epsilon^{-c_4}\) away from the boundary of \(Q\), the particles dominate an independent Poisson point process of intensity measure \(\nu(x)=(1-\epsilon)\beta\mu_x\), \(x\in Q'\), with probability at least
	\[
		1-\sum_{y\in Q'}\exp\left\{-c_5\beta\mu_y\epsilon^2\Delta^{d/2}\right\}.
	\]
\end{thrm}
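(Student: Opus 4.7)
The plan is to analyze, separately for each target site $y\in Q'$, the (random) number of particles $N_y$ arriving at $y$ at time $t+\Delta$, first lower-bounding its expectation via the Parabolic Harnack Inequality (PHI) applied to the heat kernel $p_\Delta(\cdot,y)$, and then upgrading the mean estimate to a Poisson stochastic domination through a Poissonization/sprinkling argument in the spirit of \cite{Sinclair2010,Peres2012,Hilario2014}.

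Fix $y\in Q'$. From the hypothesis I extract $M_i=\lfloor\beta\sum_{x\in T_i}\mu_x\rfloor$ particles from each subcube $T_i$ at their (adversarially placed) positions $\{z_{i,j}\}$. Since the walks are independent, $N_y$ is a sum of independent Bernoullis with parameters $p_\Delta(z_{i,j},y)$, so $\mathbb{E}[N_y]\ge\sum_i M_i\min_{z\in T_i}p_\Delta(z,y)$. Under (\ref{eq:p_c_new}), PHI for random walks on the infinite percolation cluster (valid at scales exceeding a finite but random local regularity scale, hence the requirement that $\ell$ be large) yields Hölder regularity of $z\mapsto p_\Delta(z,y)$ on spatial scales small compared with $\sqrt{\Delta}$. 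The hypothesis $\Delta\ge c_1\ell^2\epsilon^{-c_2}$ gives $\ell/\sqrt{\Delta}\le c_1^{-1/2}\epsilon^{c_2/2}$, so for $c_2$ large enough the oscillation of $p_\Delta(\cdot,y)$ on any $T_i$ is at most $\epsilon/4$ times its $\mu$-weighted average. Summing the resulting inequality and invoking detailed balance $\mu_z p_\Delta(z,y)=\mu_y p_\Delta(y,z)$ together with $\sum_z p_\Delta(y,z)=1$, I obtain $\mathbb{E}[N_y]\ge(1-\epsilon/3)\beta\mu_y\bigl(1-\mathbb{P}_y(X_\Delta\notin Q)\bigr)$. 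A Gaussian upper heat-kernel bound (another consequence of PHI) controls the escape term: since $\mathrm{dist}(y,\partial Q)\ge c_3\ell\epsilon^{-c_4}$ which dominates $\sqrt{\Delta}$ once $c_4$ is taken sufficiently larger than $c_2/2$, the escape probability is at most $\epsilon/4$, giving $\mathbb{E}[N_y]\ge(1-\epsilon/2)\beta\mu_y$.

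To promote the mean estimate to a joint Poisson stochastic domination of intensity $(1-\epsilon)\beta\mu_x$ on $Q'$, I would pool the particles landing in the diffusive ball $B(y,\sqrt{\Delta})$. By the same PHI argument the transition density $p_\Delta(\cdot,y)$ is essentially constant on that ball, so the expected number of such landings is of order $\beta\mu_y\Delta^{d/2}$; a Chernoff bound for the corresponding sum of independent Bernoullis then makes this count at least $(1-\epsilon/2)$ times its mean with failure probability at most $\exp\{-c_5\beta\mu_y\epsilon^2\Delta^{d/2}\}$---this is the origin of the $\Delta^{d/2}$ factor in the stated bound. Conditionally on these landings, each endpoint is approximately $\mu$-uniformly distributed over $B(y,\sqrt\Delta)$ (again by PHI plus reversibility) with per-site probabilities of order $\Delta^{-d/2}$, the small-probability regime in which a Le Cam/Poisson-approximation argument produces per-site Poisson domination at intensity $(1-\epsilon)\beta\mu_y$. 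Independence of the extracted particles together with disjointness of the target sites delivers joint domination, and a union bound over $y\in Q'$ produces the claimed overall failure probability.

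The main obstacle will be making the PHI/Hölder step quantitatively precise under the weaker hypothesis (\ref{eq:p_c_new}): PHI and the matching Gaussian heat-kernel bounds on the percolation cluster only hold at scales above a random local regularity scale, so applying the oscillation estimate uniformly over all subcubes $T_i$ entering the bound forces $\ell$ to be taken large and requires absorbing the residual bad event via a union bound. A secondary technical subtlety is that the initial configuration is only a deterministic lower bound rather than a Poisson configuration, so the usual Poissonization shortcut (thinning preserves Poisson) is unavailable; instead, Poisson domination must be produced directly through the ball-aggregated Chernoff step plus the conditional near-uniformity described above.
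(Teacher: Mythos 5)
Your first paragraph — lower-bounding $\sum_j f_\Delta(x_j,y)$ by $(1-\epsilon/2)\beta\mu_y$ via PHI oscillation over each subcube $T_i$, reversibility $\mu_z q_\Delta(z,y)=\mu_y q_\Delta(y,z)$, and an exit-time bound to control the mass escaping $Q$ — is essentially identical to the paper's proof of inequality (\ref{for:sumfbound}), down to the choice of scales. That part is fine and is where the PHI really does the work.

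The gap is in the step that turns this mean estimate into a joint stochastic domination by a Poisson point process. The paper does this with the soft local time machinery (\Cref{prop:mixing}): one generates the target Poisson process by ``exploring'' the particle densities $f_\Delta(x_j,\cdot)$, and the construction has the deterministic feature that the coupling succeeds whenever the soft local time $H_J(y)=\sum_j\xi_j f_\Delta(x_j,y)$ exceeds the target intensity at every $y$. The union bound is then over a single scalar event per site, and the exponential moment of $H_J(y)$ yields the $\exp\{-c\beta\mu_y\epsilon^2\Delta^{d/2}\}$ tail directly. Your proposed substitute — pool the landings in $B(y,\sqrt\Delta)$, Chernoff bound the pooled count, then invoke Le Cam — does not close the gap. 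Le Cam's theorem controls total variation distance between a Bernoulli sum and a Poisson of the same mean; it does not give stochastic domination, and turning approximation into domination requires an explicit coupling that you have not constructed. More seriously, the concluding sentence ``Independence of the extracted particles together with disjointness of the target sites delivers joint domination'' is false: the balls $B(y,\sqrt\Delta)$ for nearby $y\in Q'$ overlap heavily, so the pooled counts you condition on share particles and are strongly dependent. The vector $(N_y)_{y\in Q'}$ is a multinomial allocation of the $J$ particles, and a per-$y$ argument does not give a coupling with an independent Poisson field without something like the soft local time construction or the Sinclair--Stauffer coupling it refines. You have correctly identified (in your closing remark) that the absence of Poissonian input is the crux, but the Chernoff-plus-Le Cam route you sketch does not actually produce the required coupling.
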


We will prove a more detailed version of this theorem in \Cref{sect:mixing} (see \Cref{thrm:mixing}). Although we only prove the result for the case of conductances on the square lattice, \Cref{thrm:mixing_simple} holds for more general graphs. The theorem holds for any graph \(G\) and any region \(Q\) of \(G\) that can be tessellated into subregions of diameter at most \(\ell\) whenever each such subregion is dense enough, the so-called parabolic Harnack inequality holds for \(G\) and we have estimates on the displacement of random walks on \(G\). We discuss some extensions in \Cref{sect:extensions}.

The structure of this paper is as follows. In \Cref{sect:setup}, we formally define the family of graphs we consider for local mixing and present results concerning the parabolic Harnack inequality, heat kernel bounds and exit times for random walks on such graphs. In \Cref{sect:mixing}, we state a more precise version of \Cref{thrm:mixing_simple} and prove it. In \Cref{sect:extensions} we prove an extension of the local mixing result to random walks whose displacement is conditioned to be bounded, which is particularly useful in applications \cite{Sinclair2010,Gracar2016a}. In \Cref{sect:spread}, we use the local mixing result and results from our companion paper \cite{Gracar2016a} to prove Theorems \ref{thrm:spread} and \ref{thrm:spread_recov} for graphs satisfying (\ref{eq:mu_bounds_new}).

\section{Heat kernel estimates and exit times}\label{sect:setup}

In this section, we consider a general graph \(G=(V,E)\), with uniformly bounded degrees. For \(x,y\in V\), let \(|x-y|\) denote the distance between \(x\) and \(y\) in \(G\). For \(x\in V\), let \(B(x,r)=\{y\in V:\;|x-y|\leq r\}\) be the ball of radius \(r\) centered at \(x\). We consider non-negative weights (conductances) \((\mu_{x,y})_{(x,y)\in E}\), that are symmetric. As in \Cref{sect:intro}, we denote by \(x\sim y\) whenever \(x,y\in V\) are neighbors in \(G\), and define \(\mu_x=\sum_{y\sim x}\mu_{x,y}\). We also extend \(\mu\) to a measure on \(V\). For simplicity, the reader may think of \(V\) as \(\mathbb{Z}^d\) and \(\mu_{x,y}\) being i.i.d.\ random variables satisfying (\ref{eq:p_c_new}).
We keep our notation in greater generality as we want to highlight the exact conditions we need for our results.

Assume the existence of \(d\geq 1\) and \(C_U\) such that
\begin{equation}\label{for:graphupperbound}
	\mu(B(x,r))\leq C_Ur^d,\quad \textrm{for all }r\geq1,\textrm{ and }x\in V.
\end{equation}

We consider a continuous time simple random walk on the weighted graph \(\mathcal{G}:=(G,\mu)\), which jumps from vertex \(x\) to vertex \(y\) at rate \(\frac{\mu_{x,y}}{\mu_x}\). More formally, for any function \(f:V\rightarrow\mathbb{R}\), let
\begin{equation}\label{for:heatequation}
	\mathcal{L}f(x)=\mu_x^{-1}\sum_{y\sim x}\mu_{x,y}(f(y)-f(x)),
\end{equation}
and define the random walk started at vertex \(x\) as the Markov process \(Y=(Y_t,t\in[0,\infty),\mathbb{P}_x,x\in V)\) with generator \(\mathcal{L}\). Its \emph{heat kernel} on the graph is defined as
	\begin{equation}\label{for:heatkernel}
		q_t(x,y)=\frac{\mathbb{P}_x(Y_t=y)}{\mu_y},\textrm{ for any }x,y\in V.
	\end{equation}	
						
	We will say that a particle walks along \(\mathcal{G}\) if it is a Markov process with generator \(\mathcal{L}\) as defined above.
	We now state several definitions from \cite{Barlow2004} which we use throughout the paper.
						
	\begin{mydef}[Very good balls]\label{def:balls}
		Let \(C_V\), \(C_P\) and \(C_W\geq 1\) be fixed constants. We say \(B(x,r)\) is \((C_V,C_P,C_W)-good\) if:
		\[
			\mu(B(x,r))\geq C_Vr^d,
		\]
		and the weak Poincar\'e inequality
		\[
			\sum_{y\in B(x,r)}(f(y)-\bar f_{B(x,r)})^2\mu_y\leq C_Pr^2\sum_{y,z\in B(x,C_Wr),z\sim y}(f(y)-f(z))^2\mu_{yz}
		\]
		holds for every \(f:B(x,C_Wr)\rightarrow\mathbb{R}\), where \(\bar f_{B(x,r)}=\mu(B(x,r))^{-1}\sum_{y\in B(x,r)}f(y)\mu_y\) is the weighted average of \(f\) in \(B(x,r)\). Furthermore, we say \(B(x,R)\) is \((C_V,C_P,C_W)- very\; good\) if there exists \(N_B=N_{B(x,R)}\leq R^{1/(d+2)}\) such that for all \(r\geq N_B\),  \(B(y,r)\) is good whenever \(B(y,r)\subseteq B(x,R)\). We assume that \(N_B\geq 1\).
	\end{mydef}
						
	For the remainder of the paper we assume that \(d\geq 2\), fix \(C_U\), \(C_V\), \(C_P\) and \(C_W\) and take \(\mathcal{G}=(V,E,\mu)\) to satisfy (\ref{for:graphupperbound}).
						
	We are now ready to present some key results from \cite{Hambly2009} that control the variation of the random walk density function. We will also present a result about random walk exit times which was initially shown in \cite{Barlow2004} for Bernoulli percolation clusters and then generalized to our setup in \cite{Hambly2009}. The first result gives Gaussian upper and lower bounds for the heat kernel for very good balls.
	
\begin{prop}\label{prop:heat_kernel_bounds}\cite[Theorem 2.2]{Hambly2009}
	Assume the weights \(\mu_{x,y}\) are i.i.d.\ and (\ref{eq:p_c_new}) holds. Fix a vertex \(x\in V\). Suppose there exists \(R_1=R_1(x)\) such that \(B(x,R)\) is very good with \(N_{B(x,R)}^{3(d+2)}\leq R\) for every \(R\geq R_1\). Then there exist positive constants \(c_1\), \(c_2\), \(c_3\), \(c_4\) such that if \(t\geq R_1^{2/3}\), we obtain
	\[
		q_t(x,y)\leq c_1t^{-d/2}e^{-c_2|x-y|^2/t},\quad\textrm{for all }y\in V\textrm{ with }|x-y|\leq t
	\]
	and
	\[
		q_t(x,y)\geq c_3t^{-d/2}e^{-c_4|x-y|^2/t},\quad\textrm{for all }y\in V\textrm{ with }|x-y|^{3/2}\leq t.
	\]
\end{prop}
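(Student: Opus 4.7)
The proof would follow the classical three-step scheme (Nash--Moser--Davies, in the discrete weighted formulation of Delmotte and Barlow--Bass--Kumagai) for passing from volume regularity plus a Poincar\'e inequality to Gaussian heat-kernel estimates. The novelty here is purely quantitative: the volume lower bound and the weak Poincar\'e inequality hold only for sub-balls of radius \(r\geq N_B\) inside \(B(x,R)\), so every step must be carried out uniformly at the admissible scales, and the hypotheses \(N_{B(x,R)}^{3(d+2)}\leq R\) and \(t\geq R_1^{2/3}\) are precisely what is needed to remain in that good regime.

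\textbf{Step 1 (Parabolic Harnack inequality at admissible scales).} I would combine the volume lower bound \(\mu(B(y,r))\geq C_V r^d\) with the weak Poincar\'e inequality to deduce a Sobolev--Nash inequality on \(B(y,C_W r)\), valid whenever \(r\geq N_B\) and \(B(y,C_W r)\subseteq B(x,R)\). Moser iteration then yields the parabolic mean-value inequality on space-time cylinders \(B(y,r)\times[s,s+r^2]\); combined with a weak \(L^p\) Harnack estimate (John--Nirenberg applied to the logarithm of a caloric function), this produces the full PHI for non-negative caloric functions at every scale \(r\in[N_B,R]\).

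\textbf{Step 2 (Upper bound).} The Nash inequality already gives the on-diagonal estimate \(q_t(x,x)\leq c\,t^{-d/2}\) for \(t\geq N_B^2\). To obtain the off-diagonal Gaussian factor, I would use the Davies perturbation method: conjugate the semigroup by \(e^{\lambda\psi}\) for a \(1\)-Lipschitz \(\psi\), bound the conjugated generator using uniform ellipticity of the weights on the supporting cluster, and optimize \(\lambda\) and \(\psi\). This yields \(e^{-c_2|x-y|^2/t}\) for \(|x-y|\leq t\); beyond this range Gaussian decay fails because the continuous-time walk has Poisson-type tails.

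\textbf{Step 3 (Lower bound and scale bookkeeping).} The on-diagonal lower bound \(q_t(x,x)\geq c\,t^{-d/2}\) follows from the upper bound combined with conservation of mass inside \(B(x,C\sqrt{t})\). For general \(y\), I would chain \(k:=\lceil|x-y|^2/t\rceil\) applications of the PHI along a geodesic from \(x\) to \(y\), each carried out at scale \(\sqrt{t/k}\); each step costs a multiplicative constant, producing the factor \(e^{-c_4|x-y|^2/t}\). The main technical difficulty is the bookkeeping: every Moser iteration in Step 1 and every link of the chain here must stay above the bad scale \(N_B\). Under \(|x-y|^{3/2}\leq t\) each chain segment has radius \(\sqrt{t/k}\sim t/|x-y|\geq t^{1/3}\), which exceeds \(N_B\) thanks to \(N_B^{3(d+2)}\leq R\sim t\). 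It is precisely this scale bookkeeping that forces the exponent \(2/3\) in \(t\geq R_1^{2/3}\) and the asymmetric distance restriction \(|x-y|^{3/2}\leq t\) in the lower bound, rather than the more natural \(t\geq N_B^2\) and \(|x-y|\leq t\) that would apply if every ball inside \(B(x,R)\) were good.
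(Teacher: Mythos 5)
This proposition is imported verbatim from Barlow and Hambly (\cite[Theorem~2.2]{Hambly2009}) and the paper offers no proof of it, so there is no in-paper argument to compare your attempt against. Treating your proposal instead as a reconstruction of the cited theorem, the high-level philosophy (Nash--Moser--Davies) is the right one, but the logical organization is actually reversed relative to the source and some of the bookkeeping does not hold up.

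The most telling sign is the numbering of the cited results: the heat kernel bounds are Theorem~2.2 of Barlow--Hambly, while the parabolic Harnack inequality (reproduced here as \Cref{thrm:PHI}) is their Theorem~3.1, appearing \emph{after} the heat kernel bounds. Barlow's route (in \cite{Barlow2004}, extended in \cite{Hambly2009}) is to prove exit-time estimates and Gaussian heat kernel bounds directly from the Nash inequality and volume/Poincar\'e control on good sub-balls, and only then to deduce the PHI as a corollary. Your Step~1, which manufactures a PHI by Moser iteration and then feeds it into Step~3 as the main engine of the lower bound, is a legitimate alternative programme in the abstract (PHI $\Leftrightarrow$ two-sided Gaussian bounds), but it is not the route the cited source takes, and the Moser-iteration derivation of PHI is itself a substantial undertaking in the presence of bad regions of size up to $N_B$ --- it is not a straightforward adaptation of the uniformly-elliptic case.

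The scale bookkeeping in Step~3 is also too loose to be convincing. You write ``thanks to $N_B^{3(d+2)}\leq R \sim t$,'' but the hypothesis is $t\geq R_1^{2/3}$, i.e.\ $R_1\leq t^{3/2}$, so $R$ and $t$ are not comparable in that way; one has to choose the scale $R$ at which the very-good hypothesis is invoked carefully relative to both the chaining radius $t/|x-y|$ and the range $\sqrt{t}$ of the walk, and it is precisely the exit-time estimate (\Cref{prop:exit}, quoted from \cite[Theorem 2.2a]{Hambly2009}) that Barlow uses to couple these scales, not an application of PHI along a chain. The qualitative observations you make --- the restriction $|x-y|\leq t$ in the upper bound comes from Poisson tails, and the exponent $3/2$ in the lower bound and the exponent $2/3$ in $t\geq R_1^{2/3}$ come from the need to stay above the bad scale $N_B$ --- are correct heuristics, but the argument as sketched would need the missing exit-time machinery to become a proof.
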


Now define the space-time regions
	\begin{eqnarray*}
	Q(x,R,T)&=&B(x,R)\times (0,T],\\
	Q_-(x,R,T)&=&B(x,\tfrac{R}{2})\times[\tfrac{T}{4},\tfrac{T}{2}]\\
	&\textrm{and}&\\ 
	Q_+(x,R,T)&=&B(x,\tfrac{R}{2})\times[\tfrac{3T}{4},T].
\end{eqnarray*}
	
We denote by \(t+Q(x,R,T)=B(x,R)\times (t,t+T)\). We call a function \(u:V\times\mathbb{R}\rightarrow\mathbb{R}\) \emph{caloric} on \(Q\) if it is defined on \(Q=Q(x,R,T)\) and
\[
	\frac{\partial}{\partial t}u(x,t)=\mathcal{L}u(x,t)\quad\textrm{for all }(x,t)\in Q.
\]

We say the \emph{parabolic Harnack inequality} (PHI) holds with constant \(C_H\) for \(Q=Q(x,R,T)\) if whenever \(u=u(x,t)\) is non-negative and caloric on \(Q\), then
\[
	\sup_{(x,t)\in Q_-(x,R,T)}u(x,t)\leq C_H\inf_{(x,t)\in Q_+(x,R,T)}u(x,t).
\]

It is well known that the heat kernel of a random walk on \(\mathcal{G}\) started at \(x\) is a caloric function; in fact taking \(x=0\) and \(u(x,t)=q_t(0,x)\) we have
\begin{eqnarray*}
	\frac{d}{dt}q_t(0,x)&=&\lim_{dt\rightarrow0}\frac{1}{\mu_{x}}\frac{\sum_{y\in V}\mathbb{P}_0(Y_t=y)\mathbb{P}_y(Y_{dt}=x)-\mathbb{P}_0(Y_t=x)\mathbb{P}_x(Y_{dt}\neq x)}{dt}\\
	&=&\frac{1}{\mu_x}\left(\sum_{y\sim x}\mathbb{P}_0(Y_t=y)\frac{\mu_{y,x}}{\mu_y}-\sum_{y\sim x}\frac{\mu_{x,y}}{\mu_x}\mathbb{P}_0(Y_t=x)\right)\\
	&=&\frac{1}{\mu_x}\sum_{y\sim x}\mu_{x,y}(q_t(0,y)-q_t(0,x))=\mathcal{L}q_t(0,x).
\end{eqnarray*}

The main result from \cite{Hambly2009} shows that the PHI holds in regions that are very good according to \Cref{def:balls}.

\begin{prop}\label{thrm:PHI}	\cite[Theorem 3.1]{Hambly2009}	
	Let \(x_0\in V\). Suppose that \(R_1\geq 16\) and \(B(x_0,R_1)\) is \((C_V,C_P,C_W)- very\; good\) with \(N_{B(x_0,R_1)}^{2d+4}\leq R_1/(2\log R_1)\). Then there exists a constant \(C_H>0\) such that the PHI holds for \(Q(x_1,R,R^2)\) for any \(x_1\in B(x_0,R_1/3)\) and for \(R\) such that \(R\log R=R_1\).
\end{prop}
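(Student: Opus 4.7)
Since this statement is quoted from \cite{Hambly2009}, I will only outline the strategy I would use to prove it, following the classical Moser iteration approach adapted to weighted graphs with random ``holes,'' as developed by Delmotte and by Barlow.

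The plan is to assemble the standard analytic toolkit on \(B(x_0,R_1)\) and then run Moser iteration. First, I would use the very good property together with \eqref{for:graphupperbound} to obtain volume doubling on all scales \(r\in[N_B,R_1/3]\): the lower bound \(\mu(B(y,r))\geq C_V r^d\) holds on every good sub-ball, and matched with the universal upper bound \(\mu(B(y,2r))\leq C_U(2r)^d\) this yields \(\mu(B(y,2r))\leq (2^d C_U/C_V)\mu(B(y,r))\). Second, I would combine volume doubling with the weak Poincar\'e inequality built into \Cref{def:balls} to derive a Sobolev-type inequality on each good ball of radius at least \(N_B\), via the Jerison-style Saloff-Coste argument (using a Whitney covering to upgrade local Poincar\'e to a global Sobolev inequality on each ball).

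Third, with the Sobolev inequality in hand, I would run Moser iteration on a non-negative caloric function \(u\) on \(Q(x_1,R,R^2)\). Plugging \(u^p\) (or \(\eta^2 u^p\) for suitable cutoffs \(\eta\) in space and time) into the caloric equation, integrating by parts on the graph, and using Sobolev gives reverse-H\"older inequalities that iterate to a mean-value bound
\[
\sup_{Q_+(x_1,R,R^2)} u \;\leq\; \frac{C}{R^{d+2}\mu(B(x_1,R))}\int_{Q(x_1,R,R^2)} u\,d\mu\,dt.
\]
An analogous iteration applied to \(u^{-p}\) for small \(p>0\) yields a matching lower mean-value inequality. To patch these two into the full PHI, I would invoke the Bombieri--Giusti lemma together with a parabolic John--Nirenberg estimate on the level sets of \(\log u\); this is the route used in Delmotte's proof of the equivalence of PHI with volume doubling and Poincar\'e, and it transfers to the present weighted setting essentially unchanged.

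The main obstacle is the presence of ``bad'' small-scale balls, i.e.\ balls of radius below \(N_B\) on which the weak Poincar\'e inequality is not postulated. The Moser iteration normally descends through a geometric sequence of scales, so one must truncate this sequence at scale \(N_B\) and control the loss incurred at scales below it by the trivial bounds available from \eqref{for:graphupperbound} together with ellipticity \eqref{eq:p_c_new}. The quantitative hypothesis \(N_B^{2d+4}\leq R_1/(2\log R_1)\) is exactly what is needed to ensure that (i) one can run the iteration over \(\Theta(\log(R/N_B))\) scales, and (ii) the accumulated error from the missing small scales is polynomial in \(N_B\) with exponent at most \(2d+4\), hence absorbed into the main term with room to spare. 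The restriction \(x_1\in B(x_0,R_1/3)\) then guarantees that the Moser cylinders \(Q(x_1,R,R^2)\) with \(R\log R=R_1\) remain inside \(B(x_0,R_1)\), where the good-ball property is available throughout the argument. The resulting constant \(C_H\) depends only on \(C_V,C_P,C_W\) and the elliptic constant, as required.
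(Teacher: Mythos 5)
This proposition is imported verbatim from \cite{Hambly2009} (cited as Theorem 3.1 there); the paper you were given contains no proof of it, so there is no ``paper's own proof'' to compare against, only an external citation.

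Your sketch is a sound and recognisable strategy for establishing a PHI on a weighted graph with defects at small scales, and each of its ingredients (volume doubling from the very-good ball property together with~\eqref{for:graphupperbound}, a Saloff-Coste upgrade of the weak Poincar\'e inequality to a Sobolev inequality, Moser iteration giving mean-value bounds for $u$ and $u^{-p}$, and the Bombieri--Giusti/John--Nirenberg patching) is standard. You also correctly identify the real obstacle, namely the loss of Poincar\'e below scale $N_B$, and correctly read the role of $N_B^{2d+4}\le R_1/(2\log R_1)$ as a scale-separation hypothesis and of $x_1\in B(x_0,R_1/3)$ as ensuring the iteration stays inside the region where the good-ball property is available. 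Two remarks, for accuracy rather than because they break the argument. First, the actual route in Barlow--Hambly does not run Moser iteration directly for the Harnack inequality; instead it establishes two-sided Gaussian heat kernel bounds on very good balls (this is where the Nash/Sobolev machinery is expended, as in Barlow's 2004 percolation paper) and then deduces the PHI from the two-sided Gaussian estimates by an oscillation/Fabes--Stroock type argument. Both roads reach the same destination, but if you try to reproduce the cited theorem you should be aware the published proof is organised around heat kernel bounds rather than around a direct parabolic Moser scheme. Second, be a little careful with the phrase ``trivial bounds from ellipticity~\eqref{eq:p_c_new}'' at scales below $N_B$: under~\eqref{eq:p_c_new} conductances may vanish, so the cluster geometry at small scales is not controlled by ellipticity alone. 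What you actually lean on there is that $N_B$ is small relative to $R$ (that is what ``very good'' encodes), not that the graph is uniformly nice at every scale.
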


A direct consequence of the PHI is the following known proposition, which when applied to the caloric function \(u(x,t)=q_t(0,x)\) gives that \(q_t(0,x)\) and \(q_t(0,y)\) are very similar to each other when \(x\) and \(y\) are close by. This property will be crucial for our proof of local mixing, so we give the proof of this proposition for completeness.

\begin{prop}\label{prop:PHI}
	Let \(x_0\in V\). Suppose that there exists \(s(x_0)\geq 0\) so that for all \(R\geq s(x_0)\), the PHI holds with constant \(C_H\) for \(Q(x_0,R,R^2)\) and that the ball \(B(x_0,R)\) is \((C_V,C_P,C_W)- very\; good\). Let \(\Theta=\log_2(C_H/(C_H-1))\), and for \(x,y\in V\) define
	\[
		\rho(x_0,x,y)=s(x_0)\vee |x_0-x|\vee |x_0-y|.
	\]
	There exists a constant \(c>0\) such that the following holds. Let \(r_0\geq s(x_0)\) and suppose that \(u=u(x,t)\) is caloric in \(Q=Q(x_0,r_0,r_0^2)\). Then for any \(x_1,x_2\in B(x_0,\frac{1}{2}r_0)\) and any \(t_1,t_2\) such that \(r_0^2-\rho(x_0,x_1,x_2)^2\leq t_1,t_2\leq r_0^2\) we have
	\begin{equation}\label{for:phi}
		|u(x_1,t_1)-u(x_2,t_2)|\leq c\left(\frac{\rho(x_0,x_1,x_2)}{r_0}\right)^{\Theta}\sup_{(t,x)\in Q_+(x_0,r_0,r_0^2)}|u(t,x)|.
	\end{equation}
\end{prop}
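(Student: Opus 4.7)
The plan is the classical iterative oscillation-reduction derivation of H\"older continuity from a parabolic Harnack inequality. The H\"older exponent $\Theta = \log_2(C_H/(C_H-1))$ will appear naturally from the geometric factor $\eta := (C_H-1)/C_H = 2^{-\Theta}$ by which the oscillation of $u$ shrinks per dyadic scale under a single application of PHI, compounded down to scale $\rho$.

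The first (and main) step is the one-scale oscillation reduction. Fix $r \geq s(x_0)$ and let $\tilde Q = Q(x_0, r, r^2)$, $M = \sup_{\tilde Q} u$, $m = \inf_{\tilde Q} u$. Both $M - u$ and $u - m$ are non-negative and caloric on $\tilde Q$, so applying the PHI to each gives
\[
M - \inf_{\tilde Q_-} u \leq C_H\bigl(M - \sup_{\tilde Q_+} u\bigr) \quad \text{and} \quad \sup_{\tilde Q_-} u - m \leq C_H\bigl(\inf_{\tilde Q_+} u - m\bigr).
\]
Rearranging and combining yields $\mathrm{osc}_{\tilde Q_+}(u) \leq \eta \cdot \mathrm{osc}_{\tilde Q}(u)$. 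Since caloric functions are stable under time shifts and the PHI is time-translation invariant, the same reduction holds on any shifted cylinder $t + \tilde Q$.

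I then iterate on a dyadic tower. Set $r_k = r_0/2^k$ and $Q^{(k)} = B(x_0, r_k) \times [r_0^2 - r_k^2, r_0^2]$, for which one directly checks $(Q^{(k)})_+ = Q^{(k+1)}$ and $Q^{(1)} = Q_+(x_0, r_0, r_0^2)$. As long as $r_k \geq s(x_0)$, the one-scale estimate applies to the appropriately time-shifted copy of $Q(x_0, r_k, r_k^2)$ and gives $\mathrm{osc}_{Q^{(k+1)}}(u) \leq \eta \, \mathrm{osc}_{Q^{(k)}}(u)$. Iterating from $k = 1$,
\[
\mathrm{osc}_{Q^{(k)}}(u) \leq \eta^{k-1} \mathrm{osc}_{Q^{(1)}}(u) \leq 2 \eta^{k-1} \sup_{Q_+(x_0, r_0, r_0^2)} |u|.
\]

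To conclude, I tune the depth. Take $k = \lfloor \log_2(r_0/\rho)\rfloor$, so $r_k \geq \rho$ and $\eta^{k-1} \leq \eta^{-1} (2\rho/r_0)^\Theta$. For $i = 1,2$ we have $|x_0 - x_i| \leq \rho \leq r_k$ and $r_0^2 - t_i \leq \rho^2 \leq r_k^2$, so both $(x_i,t_i)$ lie in $Q^{(k)}$; moreover $r_k \geq \rho \geq s(x_0)$ ensures the PHI was legitimately available at every intermediate scale. Therefore
\[
|u(x_1,t_1) - u(x_2,t_2)| \leq \mathrm{osc}_{Q^{(k)}}(u) \leq 2\eta^{k-1} \sup_{Q_+(x_0, r_0, r_0^2)}|u| \leq c \left(\frac{\rho}{r_0}\right)^\Theta \sup_{Q_+(x_0, r_0, r_0^2)}|u|,
\]
with $c$ depending only on $C_H$. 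The degenerate case $\rho \geq r_0/2$ (where the dyadic chain is empty) is trivial and can be absorbed into $c$. The only non-routine point is the first step, namely extracting the precise contraction factor $\eta = 1 - 1/C_H$ out of the two PHI applications; everything after that is dyadic bookkeeping.
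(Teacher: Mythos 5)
Your proof is correct and coincides with the paper's argument: both apply the PHI to $\sup u - u$ and $u - \inf u$ on a dyadic tower of time-shifted sub-cylinders to obtain the oscillation contraction factor $(C_H-1)/C_H = 2^{-\Theta}$ per scale, then iterate down to the largest dyadic scale $\geq \rho$ and unwind. The only cosmetic difference is that you make the final bound $\operatorname{osc}_{Q^{(1)}}(u)\leq 2\sup_{Q_+}|u|$ explicit, which the paper leaves implicit in ``the result follows.''
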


\begin{proof}
	For any integer \(k\geq0\), set \(r_k=2^{-k}r_0\), and let
	\begin{eqnarray*}
		Q(k)&=&(r_0^2-r_k^2)+Q(x_0,r_k,r_k^2),\\
		Q_+(k)&=&(r_0^2-r_k^2)+Q_+(x_0,r_k,r_k^2)\\
		&and&\\
		Q_-(k)&=&(r_0^2-r_k^2)+Q_-(x_0,r_k,r_k^2).
	\end{eqnarray*}
	This gives that \(Q_+(k)=Q(k+1)\). Now take \(k\geq 1\) small enough, so that \(r_k\geq s(x_0)\). If we apply the PHI to the non-negative caloric functions \(-u+\sup_{Q(k)}u\) and \(u-\inf_{Q(k)}u\), we get the inequalities
	\begin{eqnarray*}
		\sup_{Q(k)}u-\inf_{Q_-(k)}u&\leq& C_H(\sup_{Q(k)}u-\sup_{Q_+(k)}u)
	\end{eqnarray*}
	and
	\begin{eqnarray*}
		\sup_{Q_-(k)}u-\inf_{Q(k)}u&\leq& C_H(\inf_{Q_+(k)}u-\inf_{Q(k)}u).
	\end{eqnarray*}
	Adding them together and using \(\sup_{Q_-(k)}u-\inf_{Q_-(k)}u\geq 0\) gives
	\[
		\sup_{Q(k)}u-\inf_{Q(k)}u\leq C_H(\sup_{Q(k)}u-\inf_{Q(k)}u)-C_H(\sup_{Q_+(k)}u-\inf_{Q_+(k)}u)
	\]

	Denoting by \(\operatorname{Osc}(u,A)=\sup_{A}u-\inf_{A}u\) and	setting \(\delta=C_H^{-1}\), this gives
							
	\begin{equation}\label{for:osc}
		\operatorname{Osc}(u,Q_+(k))\leq(1-\delta)\operatorname{Osc}(u,Q(k)).
	\end{equation}
							
	Next, take the largest \(m\) such that \(r_m\geq\rho(x_0,x_1,x_2)\). Then, applying (\ref{for:osc}) repeatedly on \(Q(1)\supset Q(2)\supset\dots Q(m)\) yields, since \((x_i,t_i)\in Q(m)\),
	\[
		|u(t_1,x_1)-u(t_2,x_2)|\leq\operatorname{Osc}(u,Q(m))\leq (1-\delta)^{m-1}\operatorname{Osc}(u,Q(1)).
	\]
	Since
	\[
		(1-\delta)^m=2^{-m\Theta}\leq \left(\frac{2\rho(x_0,x_1,x_2)}{r_0}\right)^{\Theta},
	\]
	the result follows.
\end{proof}

We will also need to control the exit time of the random walk out of a ball of radius \(r\), which we define as
\[
	\tau(x,r)=\inf\{t: Y_t\not\in B(x,r)\}.
\] 

\begin{prop}\label{prop:exit}
	Let \(x_0\in V\) and let \(B(x_0, R)\) be \((C_V,C_P,C_W)- very\; good\) with \(N_B^{d+2}<R\). Let \(x\in B(x_0,\frac{5}{9}R)\). There exist positive constants \(c_1\), \(c_2\), \(c_3\), \(c_4\) such that if \(t\), \(r\) satisfy
	\begin{equation}\label{for:exit_times_condition}
		0<r\leq R\quad\textrm{and}\quad c_1 N_B^d(\log N_B)^{1/2}r\leq t\leq c_{2} R^2/\log R,
	\end{equation}
	then we have
	\begin{equation}\label{for:exit_time_bound}
		\mathbb{P}_x(\tau(x,r)<t)\leq c_3\exp\{-c_4r^2/t\}.
	\end{equation}		
\end{prop}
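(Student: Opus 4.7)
The plan is to combine the Gaussian upper heat-kernel bound of \Cref{prop:heat_kernel_bounds} with a strong-Markov decomposition at the exit time. The skeleton is: first bound $\mathbb{P}_x(|Y_t - x| \geq r/2)$ directly from the heat-kernel estimate, then apply strong Markov at $\tau := \tau(x,r)$ to reduce the exit-time probability to a tail of the same form started from a point on the outer boundary of $B(x,r)$.

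For the first step I would write
\[
\mathbb{P}_x\bigl(|Y_t - x| \geq r/2\bigr) \;=\; \sum_{y:\,|y-x|\geq r/2} \mu_y\, q_t(x, y).
\]
Because $x \in B(x_0, \tfrac{5}{9} R)$ and $B(x_0, R)$ is very good with $N_B^{d+2} < R$, the ball $B(x, \tfrac{4}{9} R)$ is contained in $B(x_0, R)$ and inherits very-goodness, so \Cref{prop:heat_kernel_bounds} applies at $x$. Plugging the Gaussian upper bound for $q_t$ into the sum above, grouping the terms into dyadic annuli and using the volume growth (\ref{for:graphupperbound}) to count the vertices in each annulus, the resulting geometric series is dominated by its first term, giving $\mathbb{P}_x(|Y_t - x| \geq r/2) \leq c\exp(-c' r^2/t)$. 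The hypothesis $t \geq c_1 N_B^d (\log N_B)^{1/2} r$ plays two roles: it exceeds the lower-bound threshold $R_1^{2/3}$ of \Cref{prop:heat_kernel_bounds} (which is polynomial in $N_B$), and it forces the Gaussian exponent $r^2/t \gtrsim r$ to be large enough to absorb the polynomial prefactors coming from the volume count. The rare vertices $y$ with $|y-x| > t$ are outside the range of \Cref{prop:heat_kernel_bounds}; for them the number of jumps of $Y$ on $[0,t]$ is stochastically dominated by a Poisson$(t)$ variable, which gives strictly faster decay.

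For the second step I would decompose
\[
\mathbb{P}_x\bigl(\tau(x,r) \leq t\bigr) \;\leq\; \mathbb{P}_x\bigl(|Y_t - x| \geq r/2\bigr) + \mathbb{P}_x\bigl(\tau(x,r) \leq t,\; |Y_t - x| < r/2\bigr),
\]
and apply the strong Markov property at $\tau$ to the second summand. On the event in question $|Y_\tau - x| \geq r$ while $|Y_t - x| < r/2$, so $|Y_{t-\tau} - Y_\tau| > r/2$, and this summand is bounded by the supremum of $\mathbb{P}_y(|Y_s - y| \geq r/2)$ over points $y$ at graph distance $\geq r$ from $x$ and times $s \leq t$. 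The main obstacle is bounding this re-centered tail by the same $\exp(-c r^2/t)$ estimate: \Cref{prop:heat_kernel_bounds} requires a very-good ball around the starting point, but $y$ may sit near or even outside the boundary of $B(x_0,R)$. In the regime where $y$ still admits a very-good sub-ball of $B(x_0, R)$ of radius comparable to $\sqrt{t}$ (which holds as long as $r$ is not too close to $R$), the first-step argument repeats verbatim at $y$. In the complementary regime the upper bound $t \leq c_2 R^2/\log R$ forces $r^2/t$ to be of order at least $\log R$, so the bound can be closed using a cruder universal heat-kernel estimate of Carne--Varopoulos type, which does not depend on very-goodness. The precise numerical constants $\tfrac{5}{9}$, $c_1$ and $c_2$ in (\ref{for:exit_times_condition}) are calibrated so that the two regimes dovetail; summing the two $\exp(-c r^2/t)$ contributions then yields (\ref{for:exit_time_bound}).
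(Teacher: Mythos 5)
The paper does not prove this statement at all; its ``proof'' is a two-sentence citation to \cite[Proposition 3.7]{Barlow2004} and \cite[Theorem 2.2(a)]{Hambly2009}. Your plan --- a Gaussian upper bound on $\mathbb{P}_x(|Y_t-x|\ge r/2)$ followed by a strong-Markov/reflection step at $\tau(x,r)$ --- is exactly the argument behind those references, so you have identified the right skeleton.

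The genuine gap is in how you source the Gaussian tail. You invoke \Cref{prop:heat_kernel_bounds}, but its hypothesis is that $B(x,R')$ is very good with $N_{B(x,R')}^{3(d+2)}\le R'$ for \emph{every} $R'\ge R_1(x)$, i.e.\ a one-parameter family of very good balls of arbitrarily large radius around $x$. \Cref{prop:exit} supplies only the single ball $B(x_0,R)$ very good with $N_B^{d+2}<R$; this controls sub-balls of $B(x_0,R)$ but says nothing about balls that leave $B(x_0,R)$. Your observation that $B(x,\tfrac{4}{9}R)\subseteq B(x_0,R)$ ``inherits very-goodness'' produces one very good ball, not the unbounded family \Cref{prop:heat_kernel_bounds} demands --- and even the exponent is off ($d+2$ vs.\ $3(d+2)$). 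So \Cref{prop:heat_kernel_bounds} is not applicable under the stated hypotheses, already at the starting point $x$ and before the strong-Markov shift to a vertex near $\partial B(x,r)$. What \cite{Barlow2004} actually proves and uses is a \emph{local} Gaussian upper bound valid on a single very good ball for $t$ in a restricted window; the ceiling $t\le c_2 R^2/\log R$ in (\ref{for:exit_times_condition}) is exactly the limit of validity of that local estimate and the floor $c_1N_B^d(\log N_B)^{1/2}r\le t$ is its onset, which is why both constraints appear in the statement. Once that local bound replaces \Cref{prop:heat_kernel_bounds}, your annular summation, the reflection decomposition, and the Poisson jump count for $|y-x|>t$ all go through. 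Your proposed Carne--Varopoulos patch for the boundary regime is not how the cited proof proceeds and is under-specified: that bound has no $t^{-d/2}$ prefactor, so the annular sum produces an $r^d$-type factor that still must be absorbed by the Gaussian exponent; the cleaner route, consistent with the constants $\tfrac59$, $c_1$, $c_2$, is to shrink the working ball so the local estimate continues to apply after the strong-Markov shift.
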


\begin{proof}
	The proposition was proven for percolation clusters in \cite[Proposition 3.7]{Barlow2004}. The proof for more general \(\mathcal{G}\) is similar and can be found in \cite[Theorem 2.2a]{Hambly2009}.
\end{proof}

Since Propositions \ref{prop:heat_kernel_bounds}, \ref{thrm:PHI} and \ref{prop:exit} rely on very good balls and the related value \(N_B\), we can assume a lower bound \(S\) such that if \(R>S\), then the conditions of all three are satisfied. More formally, we assume the following. 

\begin{assumption}\label{ass:S}
	The graph \(G\) has polynomial growth; i.e., it satisfies (\ref{for:graphupperbound}). Furthermore, there exists a function \(S: V\mapsto \mathbb{R}\) such that for all \(R_1\) with \(R_1\log R_1\geq S(x_0)\), the ball \(B(x_0,R_1)\) is \((C_V,C_P,C_W)\)-very good with \(N_{B(x_0,R_1)}^{2d+4}\leq R_1\).
	As a consequence, Propositions \ref{prop:heat_kernel_bounds}, \ref{thrm:PHI}, \ref{prop:PHI} and \ref{prop:exit} all hold for any \(R>S(x)\).
\end{assumption}
For i.i.d.\ weights as defined in \Cref{sect:intro}, we obtain the following.
\begin{prop}\label{prop:A}
	If \(V=\mathbb{Z}^d\) and the weights \(\mu_{x,y}\) are i.i.d.\ and satisfy (\ref{eq:mu_bounds_new}) or (\ref{eq:p_c_new}), then \Cref{ass:S} holds. Furthermore, we have that there exist constants \(c,\gamma>0\) such that 
	\[
		\mathbb{P}[S(x)\geq n]\leq c\exp\{-cn^{\gamma}\}\textrm{ for all }x\in\mathbb{Z}^d\textrm{ and }n\geq 0.
	\]
	If the weights \(\mu_{x,y}\) are i.i.d.\ and satisfy (\ref{eq:mu_bounds_new}), then \Cref{ass:S} holds with \(S(x)= 1\) for all \(x\in V\).
\end{prop}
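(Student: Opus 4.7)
The plan is to split on the two hypotheses and treat them separately. Under (\ref{eq:mu_bounds_new}) the verification is elementary and direct, while under (\ref{eq:p_c_new}) the heavy lifting has already been done in \cite{Barlow2004,Hambly2009} and the job reduces to translating a known stretched-exponential tail estimate into the form required by \Cref{ass:S}.

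Under (\ref{eq:mu_bounds_new}), every edge of \(\mathbb{Z}^d\) is present with conductance in \([C_M^{-1},C_M]\). I will verify that every ball \(B(x,R)\) with \(R\geq 1\) is itself \((C_V,C_P,C_W)\)-good, so that taking \(N_{B(x,R)}=1\) and \(S(x)\equiv 1\) fulfills \Cref{ass:S}. The volume lower bound \(\mu(B(x,r))\geq C_V r^d\) is immediate because \(\mu_y\geq 2dC_M^{-1}\) and the \(\ell_1\)-ball \(B(x,r)\) in \(\mathbb{Z}^d\) contains at least \(c_d r^d\) vertices. For the weak Poincar\'e inequality, I will observe that, since \(\mu_{x,y}\) and \(\mu_y\) are uniformly sandwiched between positive constants depending only on \(C_M\) and \(d\), both sides of the inequality differ from their unweighted counterparts by bounded multiplicative factors. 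The unweighted Poincar\'e inequality on \(\ell_1\)-balls of \(\mathbb{Z}^d\) is classical and holds with \(C_W=1\); hence so does the weighted version, with a constant \(C_P\) depending only on \(C_M\) and \(d\).

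Under (\ref{eq:p_c_new}), the edges carrying positive conductance form a supercritical Bernoulli bond percolation configuration, on which the conductances are uniformly elliptic. The key inputs are supplied by \cite{Barlow2004} and \cite{Hambly2009}: they produce a random scale \(R_0(x)\) defined on the infinite cluster, admitting a stretched-exponential tail \(\mathbb{P}[R_0(x)\geq n]\leq c\exp(-cn^{\gamma_0})\), with the property that whenever \(R\geq R_0(x)\), every sub-ball \(B(y,r)\subseteq B(x,R)\) with \(r\geq R_0(x)^{1/(d+2)}\) is \((C_V,C_P,C_W)\)-good, so that \(B(x,R)\) is \((C_V,C_P,C_W)\)-very good with \(N_{B(x,R)}\leq R_0(x)^{1/(d+2)}\). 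I will then define \(S(x)\) as the smallest positive real such that \(R\log R\geq S(x)\) forces both \(R\geq R_0(x)\) and \(N_{B(x,R)}^{2d+4}\leq R\). A short computation gives \(S(x)\leq C R_0(x)^{\alpha}\) for a suitable exponent \(\alpha=\alpha(d)\), and the desired tail bound \(\mathbb{P}[S(x)\geq n]\leq c\exp(-cn^{\gamma})\) with \(\gamma=\gamma_0/\alpha\) then follows from the tail on \(R_0(x)\).

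The main (and essentially only) obstacle is the exponent bookkeeping in the second case: one must verify that \(\alpha\) can be chosen so that both requirements \(R\geq R_0(x)\) and \(N_{B(x,R)}^{2d+4}\leq R\) are absorbed simultaneously under the change of variable \(R\log R\geq S\). Once this is arranged, Propositions \ref{prop:heat_kernel_bounds}, \ref{thrm:PHI}, \ref{prop:PHI} and \ref{prop:exit} apply for any \(R>S(x)\) by their respective hypotheses, completing the proof.
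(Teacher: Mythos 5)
Your proposal is correct and follows essentially the same route as the paper: both defer to \cite{Barlow2004} and \cite{Hambly2009} for the percolation case (the random scale with a stretched-exponential tail, plus the exponent bookkeeping to define $S(x)$), and both observe that under uniform ellipticity the scale can be taken to be $1$. The only cosmetic difference is that you verify the good-ball property directly from the classical Poincar\'e inequality on $\mathbb{Z}^d$, whereas the paper simply cites \cite{Delmotte1999} for the uniformly elliptic case; the substance of the argument is the same.
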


\begin{proof}
	This has been shown in \cite{Hambly2009}, following from the framework developed in \cite[Theorem 2.18 and Lemma 2.19]{Barlow2004}. The tail estimate of \(S(x)\) was obtained in \cite[Theorem 5.7]{Hambly2009}. For the case where \(\mu_{x,y}\) satisfy (\ref{eq:mu_bounds_new}), the function \(S\) can be set to \(1\) by \cite[Theorem 5.7]{Hambly2009} and the results of \cite{Delmotte1999}.
\end{proof}

\begin{remark}
	In \cite{Berger2008} it has been shown that when the weights \(\mu_{x,y}\) are i.i.d.\ but can assume values arbitrarily close to zero, so neither (\ref{eq:mu_bounds_new}) nor (\ref{eq:p_c_new}) hold, it is possible to find distributions (at least in dimensions \(d\geq 5\)) for which \Cref{ass:S} does not hold. Hence, even though we do not explicitly use uniform ellipticity of \(\mu_{x,y}\) in our proofs, this property has a fundamental role in our analysis. Recent results (see, for example, \cite{Andres2016}) have been derived to relax assumption (\ref{eq:p_c_new}), but they do not establish all properties we need.
\end{remark}

\section{Decoupling via local mixing}\label{sect:mixing}

In this section, we will restrict to the case \(V=\mathbb{Z}^d\) and \((x,y)\in E\) if and only if \(\|x-y\|_{1}=1\), but we do not assume the \(\mu_{x,y}\) are i.i.d. We define a cube of side length \(z>0\) as \(Q_z:=[-z/2,z/2]^d\).
In the remainder of the paper, we will work with the heat kernel \(q_t\) as defined in (\ref{for:heatkernel}). Since we allow \(\mu_{x,y}=0\), it is possible for two sites not to be connected. To address this we require the existence of an infinite component. Formally, we assume the following.

\begin{assumption}\label{ass:critical}
	For each \((x,y)\in E\), either \(\mu_{x,y}=0\) or it satisfies (\ref{eq:mu_bounds_new}) for a uniform constant \(C_M\). Moreover, the weights \(\mu_{x,y}\) are such that an infinite connected component of edges of positive weight within \(\mathcal{G}\) exists and contains the origin.
\end{assumption}
With this let \(\mathcal{C}_{\infty}\) be the infinite connected component of \(\mathcal{G}\) that contains the origin and define 
	\[
		\tilde Q_z:=Q_z\cap \mathcal{C}_{\infty}.
	\]
								
We note that if \(\mu_{x,y}\) satisfy (\ref{eq:mu_bounds_new}), then \Cref{ass:critical} is automatically satisfied. We will continue to call \(\tilde Q_z\) as a ``cube''. We are now ready to state the more detailed version of \Cref{thrm:mixing_simple}.			
				
\begin{thrm}\label{thrm:mixing}
	Let \(\mu_{x,y}\) satisfy Assumptions \ref{ass:S} and \ref{ass:critical}. There exist constants \(c_0\), \(c_1\), \(C>0\) such that the following holds.
	Fix \(K>\ell>0\) and \(\epsilon\in(0,1)\). Consider the cube \(Q_K\) tessellated into subcubes \((T_i)_{i}\) of side length \(\ell\) and assume that \(\ell>S^{d+1}(x)\) for all \(x\in \tilde Q_K\). 
	Let \((x_j)_{j}\subset \tilde Q_{K}\) be the locations at time \(0\) of a collection of particles, such that each subcube \(\tilde T_i\) contains at least \(\sum_{y\in \tilde T_i}\beta\mu_y\) particles for some \(\beta>0\).
	Let \(\Delta\geq c_0\ell^2\epsilon^{-4/\Theta}\) where \(\Theta\) is as in \Cref{prop:PHI}.
	For each \(j\) denote by \(Y_j\) the location of the \(j\)-th particle at time \(\Delta\). 
	Fix \(K'>0\) such that \(K-K'\geq\sqrt{\Delta}c_1\epsilon^{-1/d}\). Then there exists a coupling \(\mathbb{Q}\) of an independent Poisson point process \(\psi\) with intensity measure \(\zeta(y)=\beta(1-\epsilon)\mu_y\), \(y\in \mathcal{C}_{\infty}\), and \((Y_j)_{j}\) such that within \(\tilde Q_{K'}\subset \tilde Q_K\), \(\psi\) is a subset of \((Y_j)_{j}\) with probability at least
	\[
		1-\sum_{y\in \tilde Q_{K'}}\exp\left\{-C\beta\mu_y\epsilon^2\Delta^{d/2}\right\}.
	\]
\end{thrm}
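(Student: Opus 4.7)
The plan is to build the coupling via a splitting argument in the Sinclair--Stauffer local mixing tradition, with \Cref{prop:PHI} (the parabolic Harnack inequality) replacing the role that a local central limit theorem plays in the uniform-conductance setting. Write $\pi_j(y):=\mu_y q_\Delta(x_j,y)$ for the law of the $j$th particle's position at time $\Delta$. The first step is to show that for $\Delta\geq c_0\ell^2\epsilon^{-4/\Theta}$, the quantity $\pi_j(y)$ is nearly constant in $x_j\in\tilde T_i$. I would apply \Cref{prop:PHI} to the caloric function $u(x,t)=q_t(x,y)$ at radius $r_0\asymp\sqrt{\Delta}$ and center $x_0$ the centre of $\tilde T_i$, bounding the oscillation by $c(\ell/\sqrt{\Delta})^\Theta\sup_{Q_+}q$; using the Gaussian upper bound $\sup_{Q_+}q\leq c\Delta^{-d/2}$ and the matching lower bound $q_\Delta(x_i^*,y)\geq c\Delta^{-d/2}$ from \Cref{prop:heat_kernel_bounds} (where $x_i^*$ is an arbitrary reference point in $\tilde T_i$), this yields $q_\Delta(x_j,y)\geq(1-\epsilon/4)\,q_\Delta(x_i^*,y)$ for every particle starting in $\tilde T_i$.

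Having established this uniformity, set $p_i^*(y):=(1-\epsilon/4)\mu_y q_\Delta(x_i^*,y)$ for $y\in\tilde Q_{K'}$, so that $p_i^*(y)\leq\pi_j(y)$ whenever $x_j\in\tilde T_i$. The coupling is constructed by Poissonization followed by Poisson splitting: for each $i$, let $\tilde N_i\sim\mathrm{Poisson}(M_i)$ with $M_i:=(1-\epsilon/4)\beta\mu(\tilde T_i)$, coupled with the actual count $N_i\geq\beta\mu(\tilde T_i)$ so that $\tilde N_i\leq N_i$ apart from a Chernoff failure event; for each of these $\tilde N_i$ virtual particles (identified with arbitrary real particles of $\tilde T_i$), independently sample a destination via the decomposition $\pi_j=p_i^*\mathbf{1}_{\tilde Q_{K'}}+(\pi_j-p_i^*\mathbf{1}_{\tilde Q_{K'}})$, declaring the particle to belong to $\psi$ when the $p_i^*$ branch is selected and drawing from the renormalised residual otherwise. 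The Poisson splitting theorem then makes $\psi(y)$ an independent $\mathrm{Poisson}(\Lambda(y))$ across $y$, with
\[
\Lambda(y)=(1-\epsilon/4)^2\beta\mu_y\sum_i\mu(\tilde T_i)\,q_\Delta(x_i^*,y).
\]
Applying the PHI oscillation estimate a second time to replace $\mu(\tilde T_i)\,q_\Delta(x_i^*,y)$ by $\sum_{x\in\tilde T_i}\mu_xq_\Delta(x,y)$ within $(1\pm\epsilon/4)$, the reversibility identity $\sum_{x\in V}\mu_xq_\Delta(x,y)=1$, and the exit-time bound \Cref{prop:exit} combined with $K-K'\geq c_1\sqrt{\Delta}\epsilon^{-1/d}$ to control the mass escaping $\tilde Q_K$, one concludes $\Lambda(y)\geq(1-\epsilon)\beta\mu_y=\zeta(y)$; a final thinning then produces an independent PPP $\psi$ of intensity exactly $\zeta$ that is contained in $\{Y_j\}_j$ on the good event.

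The delicate step is recovering the per-site failure bound $\sum_y\exp(-C\beta\mu_y\epsilon^2\Delta^{d/2})$ rather than the weaker per-subcube bound $\sum_i\exp(-c\epsilon^2\beta\mu(\tilde T_i))$ that the construction above yields directly. Since only particles starting within $O(\sqrt{\Delta})$ of $y$ contribute meaningfully to $M(y)=\#\{j:Y_j=y\}$ (by the Gaussian spread of the heat kernel), and such a ball carries $\mu$-mass of order $\Delta^{d/2}$ and therefore at least of order $\beta\Delta^{d/2}$ particles by hypothesis, one can localise the Poissonization to this $\sqrt{\Delta}$-neighbourhood of each target site $y$ and apply a Chernoff/Le Cam comparison at that scale to recover the announced exponent $\beta\mu_y\epsilon^2\Delta^{d/2}$. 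The main technical burden is weaving this localised site-level concentration together with the subcube-level Poisson splitting so that the coupled $\psi$ remains a genuine \emph{independent} PPP on all of $\tilde Q_{K'}$, while keeping the accumulated multiplicative errors (from the two applications of PHI, the mass escape, and the Poissonization slack) all controlled by the slack $\epsilon$.
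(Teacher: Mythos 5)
Your proposal replaces the paper's soft local time argument (from Popov--Teixeira) with the older Sinclair--Stauffer-style ``Poissonize per subcube, then split each transition kernel into a uniform piece plus a residual'' construction. The analytic ingredients you invoke are exactly the right ones and match the paper's: Proposition~\ref{prop:PHI} to make $q_\Delta(\cdot,y)$ nearly constant on each $\tilde T_i$, the Gaussian upper bound of Proposition~\ref{prop:heat_kernel_bounds} to control the PHI supremum, reversibility $\mu_x q_\Delta(x,y)=\mu_y q_\Delta(y,x)$, and the exit-time estimate Proposition~\ref{prop:exit} together with $K-K'\geq c_1\sqrt{\Delta}\,\epsilon^{-1/d}$ to retain all but an $\epsilon$ fraction of the mass. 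That portion of your argument is a faithful parallel to the paper's derivation of the key inequality $\sum_{j\in J'(y)}f_\Delta(x_j,y)\geq\beta\mu_y(1-\epsilon/2)$.

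The gap is precisely the one you flag in your last paragraph, and it is not merely a technical nuisance: the subcube-level Poissonization cannot deliver the theorem's per-site failure exponent. Poissonizing $N_i$ to a $\mathrm{Poisson}\bigl((1-\epsilon/4)\beta\mu(\tilde T_i)\bigr)$ and requiring $\tilde N_i\leq N_i$ fails with probability $\exp\{-c\epsilon^2\beta\mu(\tilde T_i)\}\asymp\exp\{-c\epsilon^2\beta\ell^d\}$ per subcube. Since $\Delta\geq c_0\ell^2\epsilon^{-4/\Theta}$ forces $\Delta^{d/2}\gg\ell^d$, the theorem's exponent $\beta\mu_y\epsilon^2\Delta^{d/2}$ is strictly larger, so your bound is genuinely weaker than the one claimed. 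Moreover, the failure event $\{\tilde N_i>N_i\}$ is attached to the whole subcube $i$ and ruins the coupling at every target site simultaneously, so it cannot be redistributed into a site-indexed sum. Your proposed fix --- re-running the Poissonization locally in a $\sqrt{\Delta}$-neighbourhood of each $y$ --- would make the Poisson draws for overlapping neighbourhoods highly dependent, and reconstituting a single \emph{independent} Poisson process $\psi$ on all of $\tilde Q_{K'}$ from these overlapping, dependent local constructions is exactly what remains to be done; it is not a small modification of the splitting scheme but a replacement of it. This is precisely what the soft local times buy: by attaching an exponential weight $\xi_j$ to each particle and studying $H_J(y)=\sum_j\xi_j f_\Delta(x_j,y)$, the failure events $\{H_J(y)<\beta\mu_y(1-\epsilon)\}$ are site-indexed from the start, and choosing $\kappa=C\epsilon\Delta^{d/2}$ in the exponential Markov bound makes the concentration happen at scale $\Delta^{d/2}$ (there are order $\beta\Delta^{d/2}$ terms each of size order $\Delta^{-d/2}$), without ever collapsing information to the subcube level. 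You would need to import that machinery (or some equivalent site-level construction) to close the gap; without it, your argument proves a weaker statement with exponent $\ell^d$ in place of $\Delta^{d/2}$.
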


				
Note that, due to \Cref{prop:A}, \Cref{thrm:mixing_simple} is a special case of \Cref{thrm:mixing}, which we prove below. In order to do so, we will use something called \emph{soft local times}, which was introduced in \cite{Popov2012} to analyze random interlacements, following the introduction of local mixing in \cite{Sinclair2010,Peres2012,Stauffer2014}; see also \cite{Hilario2014} for an application of this technique to random walks on \(\mathbb{Z}^d\).

\begin{prop}\label{prop:mixing}
	Let \((Z_j)_{j\leq J}\) be a collection of \(J\) independent random particles on \(V\) distributed according to a family of density functions \(g_j:V\rightarrow\mathbb{R}\), \(j\leq J\). Define for all \(y\in V\) the soft local time function \(H_J(y)=\sum_{j=1}^{J}\xi_jg_j(y)\), where the \(\xi_j\) are i.i.d.\ exponential random variables of mean \(1\). Let \(\psi\) be a Poisson point process on \(V\) with intensity measure \(\rho:V\rightarrow\mathbb{R}\) and define the event
	\(
		E=\left\{\psi\textrm{ is a subset of }(Z_j)_{j\leq J}\right\}.
	\)
	Then there exists a coupling such that,  
	\[
		\mathbb{P}\left[E\right]\geq\mathbb{P}\left[H_J(y)\geq\rho(y),\;\forall y\in V\right].
	\]	
\end{prop}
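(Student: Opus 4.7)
The plan is to construct everything on a common auxiliary space built from a single Poisson point process on $V\times\mathbb{R}_+$, following the soft local times machinery of Popov and Teixeira. Let $\eta$ be a Poisson point process on $V\times\mathbb{R}_+$ with intensity $\mathrm{card}\otimes\mathrm{Leb}$, i.e.\ an independent $\mathrm{PPP}(1)$ on $\mathbb{R}_+$ over each $y\in V$. All relevant objects will be read off from $\eta$ in such a way that their individual marginal laws match what the proposition requires.

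First I would construct $(Z_j,\xi_j)_{j\le J}$ inductively from $\eta$ as in \cite{Popov2012}. Set $H_0\equiv0$, and having defined $(Z_i,\xi_i)_{i<j}$ together with $H_{j-1}(y)=\sum_{i<j}\xi_i g_i(y)$, let
\[
	\xi_j=\inf\Bigl\{\tfrac{u-H_{j-1}(z)}{g_j(z)}\;:\;(z,u)\in\eta,\;u>H_{j-1}(z)\Bigr\},
\]
attained at a unique point $(Z_j,U_j)\in\eta$. A direct computation using the mapping theorem for Poisson processes shows that, conditionally on the past, $\xi_j\sim\mathrm{Exp}(1)$ and $Z_j$ has density $g_j$, independently of $\xi_j$; moreover the relevant portion of $\eta$ above the new surface $H_j$ is again a Poisson process with intensity $\mathrm{card}\otimes\mathrm{Leb}$, so the induction closes. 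Consequently $H_J(y)=\sum_{j=1}^J\xi_j g_j(y)$ coincides with the soft local time of $(Z_j)_{j\le J}$, and the key geometric identity is
\[
	\bigl\{(z,u)\in\eta:u\le H_J(z)\bigr\}\;\text{projects onto $V$ as the multiset }(Z_j)_{j\le J}.
\]

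Next I couple $\psi$ to the same $\eta$ by defining $\psi$ as the projection onto $V$ of the points of $\eta$ lying below $\rho$, i.e.\ $\psi=\{z:(z,u)\in\eta,\,u\le\rho(z)\}$. By the restriction property of Poisson processes, $\psi$ is a Poisson point process on $V$ with intensity measure $\rho$, which is the required marginal law. On the event $\{H_J(y)\ge\rho(y)\text{ for all }y\in V\}$, every point of $\eta$ below $\rho$ also lies below $H_J$, hence by the identity above belongs to $(Z_j)_{j\le J}$. This yields $\psi\subseteq(Z_j)_{j\le J}$, i.e.\ the event $E$ occurs, and therefore
\[
	\mathbb{P}[E]\ge\mathbb{P}\bigl[H_J(y)\ge\rho(y),\ \forall y\in V\bigr],
\]
as claimed.

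The main obstacle is verifying that the inductive peel-off really produces $Z_j$'s with the right densities while simultaneously yielding i.i.d.\ Exp$(1)$ weights $\xi_j$; this is the heart of the soft local times construction. The verification rests on the observation that if $\eta$ is a unit-intensity Poisson process on $V\times\mathbb{R}_+$ and we raise its floor from $H_{j-1}$ to $H_j=H_{j-1}+\xi_jg_j$, then translating vertically identifies the remaining points above $H_j$ with an independent unit-intensity Poisson process, while the minimizer selecting $Z_j$ is the well-known ``coloring/competition'' argument applied to independent exponentials with rates $g_j(z)$. Once this is in place, the coupling with $\psi$ is immediate because both $(Z_j)$ and $\psi$ are encoded as sub-level sets of $\eta$ under the two surfaces $H_J$ and $\rho$, and containment of sub-level sets is monotone in the surfaces.
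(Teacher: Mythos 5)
The paper states this proposition without proof, citing the soft local times machinery of Popov and Teixeira \cite{Popov2012}; your proof is a correct reconstruction of exactly that argument, namely the inductive peel-off from a unit-intensity Poisson process on $V\times\mathbb{R}_+$, the competition-of-exponentials step giving $Z_j\sim g_j$ and $\xi_j\sim\mathrm{Exp}(1)$ independently, and the sub-level-set coupling $\psi=\{z:(z,u)\in\eta,\,u\le\rho(z)\}$ which makes the containment on $\{H_J\ge\rho\}$ immediate.
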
				

We are now ready to prove \Cref{thrm:mixing}.
\begin{proof}[Proof of \Cref{thrm:mixing}]
	By \Cref{prop:mixing}, there exists a coupling \(\mathbb{Q}\) of an independent Poisson point process \(\psi\) with intensity measure \(\zeta(y)=\beta(1-\epsilon)\mu_y\mathbbm{1}_{\{y\in\tilde Q_{K'}\}}\) and the locations of the particles \(Y_j\), which are distributed according to the density functions \(f_{\Delta}(x_j,y):=q_{\Delta}(x_j,y)\mu_y\), \(y\in \mathcal{C}_{\infty}\), such that \(\psi\) is a subset of \((Y_j)_{j}\) with probability at least
	\[
		\mathbb{Q}[H_J(y)\geq\beta\mu_y(1-\epsilon),\;\forall y\in \tilde Q_{K'}],
	\]	
	where \(H_J(y)=\sum_{j=1}^J\xi_j f_{\Delta}(x_j,y)\) and \((\xi_j)_{j\leq J}\) are i.i.d.\ exponential random variables with parameter 1. We first observe that the probability of the converse event is
	\begin{eqnarray*}
		\mathbb{Q}[\exists y\in \tilde Q_{K'}:\;H_J(y)<\beta\mu_y(1-\epsilon)]&\leq& \sum_{y\in \tilde Q_{K'}}\mathbb{Q}[H_J(y)<\beta\mu_y(1-\epsilon)]\\
		&\leq&\sum_{y\in \tilde Q_{K'}}e^{\kappa \mu_y\beta(1-\epsilon)}\mathbb{E}^{\mathbb{Q}}[\exp\{-\kappa H_J'(y)\}],
	\end{eqnarray*}
	where we used Markov's inequality in the last step, which is valid for any \(\kappa>0\). Let \(c_1\) be a positive constant which we will fix later and let 
	\[
		R=\sqrt{\Delta}c_1\epsilon^{-1/d}.
	\] 
	Let \(J'\) be a subset of \(\{1,2,\dots,J\}\) such that for each \(\tilde T_i\), \(J'\) contains exactly \(\lceil\sum_{y\in \tilde T_i}\beta\mu_y\rceil\) particles that are inside \(\tilde T_i\).
	Define \(J'(y)\subseteq J'\) to be the set of \(j\in J'\) such that \(|x_j-y|\leq R\) and define \(H'(y)\) as \(H_J(y)\) but with the sum restricted to \(j\in J'(y)\). Since \(H_J(y)\geq H'(y)\) we get that								
	\begin{equation}\label{for:(C)}
		\mathbb{E}^{\mathbb{Q}}[\exp\{-\kappa H_J(y)\}]\leq \mathbb{E}^{\mathbb{Q}}[\exp\{-\kappa H'(y)\}].
	\end{equation}
	Next, we use that the \(\xi_j\) in the definition of \(H\) are independent exponential random variables to obtain
	\begin{eqnarray}\label{for:(B)}
		\mathbb{E}^{\mathbb{Q}}[\exp\{-\kappa H'(y)\}]&=&\prod_{j\in J'(y)}\mathbb{E}^{\mathbb{Q}}[\exp\{-\kappa \xi_jf_{\Delta}(x_j,y)\}]\nonumber\\
		&=&\prod_{j\in J'(y)}\left(1+\kappa f_{\Delta}(x_j,y)\right)^{-1}.
	\end{eqnarray}
											 
	Using Taylor's expansion we have that \(\log(1+x)\geq x-x^2\) for \(|x|\leq\frac{1}{2}\). Since \(\ell\geq S(x)\), we can apply \Cref{prop:heat_kernel_bounds}, to have \(q_{\Delta}(x,y)\leq c_2\Delta^{-d/2}\) for a constant \(c_2>0\) and all \(y\in\tilde Q_{K'}\) and \(x\in J'(y)\).
	Hence if \(\kappa = C\epsilon\Delta^{d/2}\) for the constant \(C=(4C_Uc_2)^{-1}\), then
	\[
		\sup_{x\in B(y,R+\sqrt{d}\ell)}\kappa f_{\Delta}(x,y)=\sup_{x\in B(y,R+\sqrt{d}\ell)}\kappa \mu_y q_{\Delta}(x,y)\leq C_Uc_2\kappa \Delta^{-d/2}<\frac{\epsilon}{4}.
	\]
	For such a value of \(\kappa\) we have
	\begin{eqnarray}\label{for:(A)}
		\prod_{j\in J'(y)}\left(1+\kappa f_{\Delta}(x_j,y)\right)^{-1}&\leq&\prod_{j\in J'(y)}\exp\left\{-\kappa f_{\Delta}(x_j,y)(1-\kappa f_{\Delta}(x_j,y))\right\}\nonumber\\
		&\leq&\exp\left\{-\sum_{j\in J'(y)}\kappa f_{\Delta}(x_j,y)\left(1-\sup_{x\in B(y,R+\sqrt{d}\ell)}\kappa f_{\Delta}(x,y)\right)\right\}\nonumber\\
		&\leq&\exp\left\{-\kappa \sum_{j\in J'(y)} f_{\Delta}(x_j,y)(1-\epsilon/4)\right\}.
	\end{eqnarray}
	We claim that 
	\begin{equation}\label{for:sumfbound}
		\sum_{j\in J'(y)}f_{\Delta}(x_j,y)\geq \beta\mu_y(1-\epsilon/2),
	\end{equation}
	which together with (\ref{for:(A)}), (\ref{for:(B)}) and (\ref{for:(C)}) give that	
	\begin{eqnarray*}
		\mathbb{Q}\left[\exists y\in\tilde Q_{K'}:\;H_J(y)<\beta\mu_y(1-\epsilon)\right] &\leq&\exp\left\{\kappa\mu_y\beta(1-\epsilon)-\kappa \beta\mu_y(1-\epsilon/2)(1-\epsilon/4)\right\}\\
		&\leq&\exp\left\{-\kappa \beta\mu_y\epsilon/4\right\}.
	\end{eqnarray*}
	Using the value of \(\kappa\) gives the theorem.
	
	It remains to show (\ref{for:sumfbound}). For each \(\tilde T_i\) and each particle \(x_j\in \tilde T_i\), let \(x_j'\in \tilde T_i\) be such that \(x_j'=\max_{w\in\tilde T_i}f_{\Delta}(w,y)\). 
	Then, write
	\begin{eqnarray}\label{for:triang_ineq}
		\sum_{j\in J'(y)}f_{\Delta}(x_j,y)&\geq&\sum_{j\in J'(y)}\left(f_{\Delta}(x_{j}',y)-|f_{\Delta}(x_{j}',y)-f_{\Delta}(x_{j},y)|\right).
	\end{eqnarray}										
	
	We have for each \(\tilde T_i\)
	\begin{eqnarray}
		\sum_{\substack{j\in J'(y)\\x_j\in\tilde T_i}}f_{\Delta}(x_j',y)
		&=&\max_{w\in \tilde T_i}f_{\Delta}(w,y)\sum_{\substack{j\in J'(y)\\x_j\in \tilde T_i}}1\nonumber\\
		&\geq&\max_{w\in \tilde T_i}f_{\Delta}(w,y)\sum_{z\in \tilde T_i}\beta\mu_z\nonumber\\
		&\geq&\sum_{z\in \tilde T_i}\beta\mu_z f_{\Delta}(z,y).\label{for:xj_to_xj'}
	\end{eqnarray}	
	Set \(R(y)\) to be the set of all sites \(z\) such that \(|z-y|\leq R-\sqrt{d}\ell\). Note that if \(z\in R(y)\) then for all particles \(x_j\) with \(x'_j=z\) and \(j\in J'\) we have \(j\in J'(y)\). 
	We observe that since \(\mu_z f_{\Delta}(z,y)=\mu_y f_{\Delta}(y,z)\), we have by using (\ref{for:xj_to_xj'}) for each \(\tilde T_i\) that
	\begin{eqnarray*}
		\sum_{j\in J'(y)}f_{\Delta}(x_j',y)&\geq&\sum_{z\in R(y)}\beta\mu_z f_{\Delta}(z,y)\\
		&=&\beta\mu_y\sum_{z\in R(y)}f_{\Delta}(y,z).
	\end{eqnarray*}
	Then, since \(\ell>S^{d+1}(x)\) we have by \Cref{prop:exit} that there exist constants \(c_4\) and \(c_5\) such that 
	\begin{eqnarray}\label{for:fbound1}
		\sum_{j\in J'(y)}f_{\Delta}(x_j',y)&\geq&\beta\mu_y\mathbb{P}_y(\tau(y,R-\sqrt{d}\ell)\geq \Delta)\nonumber\\
		&\geq&\beta\mu_y(1-c_4\exp\{-c_5c_1^2\epsilon^{-2/d}\})\nonumber\\
		&\geq&\beta\mu_y(1-\epsilon/4),
	\end{eqnarray}
	where we set \(c_1\) large enough with respect to \(c_4\) and \(c_5\) for the last inequality to hold.
											
	Now it remains to obtain an upper bound for the term \(\sum_{j\in J'(y)}|f_{\Delta}(x_{j}',y)-f_{\Delta}(x_{j},y)|\).
	We define \(I\) to be the set of all \(i\) such that \(\tilde T_i\) contains a particle \(x_j\) from the set \((x_j)_{j\in J'(y)}\). Then, since \(\ell>S(x)\), there exists positive constants \(C_{PHI}\) and \(C_{BH}\) such that if we apply the PHI (cf. \Cref{prop:PHI}) with	\begin{equation}\label{for:r_0}
		r_0^2=\Delta\geq c_0\ell^2\epsilon^{-4/\Theta}
	\end{equation}
	for some constant \(c_0>d\), we obtain
	
	\begin{eqnarray*}
		\sum_{j\in J'(y)}|f_{\Delta}(x_j',y)-f_{\Delta}(x_j,y)|&=&\sum_{i\in I}\sum_{\substack{j\in J'(y):\\x_j\in \tilde T_i}}|f_{\Delta}(x_j',y)-f_{\Delta}(x_j,y)|\\
		&=&\mu_y\sum_{i\in I}\sum_{\substack{j\in J'(y):\\x_j\in \tilde T_i}}|q_{\Delta}(x_j',y)-q_{\Delta}(x_j,y)|\\
		&\leq&\mu_y\sum_{i\in I}\sum_{\substack{j\in J'(y):\\x_j\in \tilde T_i}}\frac{C_{PHI}\ell^{\Theta}}{\Delta^{\Theta/2}}C_{BH}\Delta^{-d/2}\\
		&\leq&\mu_y\sum_{i\in I}\sum_{x\in \tilde T_i}\frac{2\beta\mu_x C_{PHI}\ell^{\Theta}}{\Delta^{\Theta/2}}C_{BH}\Delta^{-d/2},
	\end{eqnarray*}
	where in the first inequality we replaced the supremum term coming from \Cref{prop:PHI} by its upper bound \(C_{BH}\Delta^{-d/2}\) from \Cref{prop:heat_kernel_bounds}, and used that \(r_0=\sqrt{\Delta}\) in the bound from \Cref{prop:PHI}. Then
	\begin{eqnarray}\label{for:fbound2}
		\sum_{j\in J'(t)}|f_{\Delta}(x_j',y)-f_{\Delta}(x_j,y)|&\leq&2\beta\mu_y C_{PHI}C_{BH}\sum_{i\in I}\sum_{x\in \tilde T_i}\mu_x\ell^{\Theta}\Delta^{-(d+\Theta)/2}\nonumber\\
		&\leq&2\beta\mu_y C_{PHI}C_{BH}C_UR^d\ell^{\Theta}\Delta^{-(d+\Theta)/2}\nonumber\\
		&\leq&\beta\mu_y\frac{\epsilon}{4},
	\end{eqnarray}
	where the last inequality holds by using \(\Delta\geq c_0\ell^2\epsilon^{-4/\Theta}\) and setting \(c_0>(2C_{PHI}C_{BH}C_Uc_1^d)^{-2/\theta}\). Note that in order to use \Cref{prop:PHI}, we need to have that each pair \(x_j,x_j'\) is contained in some ball \(B(x_0,r_0/2)\). This is satisfied since \(\|x_j-x_j'\|\leq\sqrt{d}\ell\) and \(r_0\) is set sufficiently large by (\ref{for:r_0}).
	Plugging (\ref{for:fbound2}) and (\ref{for:fbound1}) into (\ref{for:triang_ineq}) proves (\ref{for:sumfbound}).
	\end{proof}

\section{Extensions}\label{sect:extensions}
				
Although the estimate derived in \Cref{thrm:mixing} does not depend on the particles outside of \(Q_K\) at time \(0\) when \(K-K'\) is sufficiently large, it still depends on the geometry of the entire graph outside of \(Q_K\). In some applications, as in our companion paper \cite{Gracar2016a}, one needs to apply this coupling in many different regions of the graph simultaneously. In such cases, in order to control dependences between different regions, it is important that the coupling procedure depends only on the local structure of the graph. In order to do this, we will condition the particles to be inside some large enough, but finite region while they move for time \(\Delta\). Recall that, for any \(\rho>0\), \(Q_{\rho}=[-\rho/2,\rho/2]^d\) is the cube of side length \(\rho\). For any \(\rho>0\), we say that a random walk has \emph{displacement} in \(Q_{\rho}\) during \([0,\Delta]\) if the random walk never exits \(x+Q_{\rho}\) during the time interval \([0,\Delta]\), where \(x\) is the starting vertex of the random walk.
				
\begin{lemma}\label{lem:gtoq}
	Let \(\mu_{x,y}\) satisfy Assumptions \ref{ass:S} and \ref{ass:critical}. There exist constants \(c_1\) and \(c_2\) so that the following holds. Let \(V=\mathbb{Z}^d\), \(\ell>0\) and consider the cube \(Q_{\ell}\). Assume \(\ell>S(x)\) for all \(x\in Q_{\ell}\). Let \(\Delta>c_1\ell^2\) and \(\rho\geq c_2\sqrt{\Delta\log\Delta}\). Consider a random walk \(Y\) that moves along \(\mathcal{G}\) for time \(\Delta\) conditioned on having its displacement in \(Q_{\rho}\) during the time interval \([0,\Delta]\). Let \(x,y\in Q_\ell\) with \(x\) being the starting point of the walk, and define 
	\[g(x,y):=\mathbb{P}_x\left[Y_{\Delta}=y\,|\,Y\textrm{ has displacement in }Q_{\rho}\textrm{ during }[0,\Delta]\right].
	\]
	Then there exists a constant \(C>2\) such that for \(x,y,z\in Q_{\ell}\) we have
	\[
		\left|\frac{g(x,y)}{\mu_y}-\frac{g(z,y)}{\mu_y}\right|\leq C\ell^{\Theta}\Delta^{-(d+\Theta)/2}.
	\]
\end{lemma}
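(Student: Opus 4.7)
The plan is to express $g(x,y)$ explicitly via the unconditioned heat kernel and the exit probability, and then reduce the estimate to one application of the PHI (\Cref{prop:PHI}) plus the tail bound on the exit time (\Cref{prop:exit}). Set $A_w = \{Y \text{ has displacement in } w+Q_\rho \text{ during } [0,\Delta]\}$ and $p_w := \mathbb{P}_w[A_w]$, so that
\[
\mathbb{P}_w[Y_\Delta = y, A_w] = \mu_y\,q_\Delta(w,y) - \mathbb{P}_w[Y_\Delta = y, A_w^c],
\qquad g(w,y) = \frac{\mathbb{P}_w[Y_\Delta = y, A_w]}{p_w}.
\]
Since $w+Q_\rho$ contains the graph-distance ball $B(w,\rho/2)$ and $\rho \geq c_2\sqrt{\Delta\log\Delta}$, \Cref{prop:exit} applied at scale $\rho/2$ (its hypotheses are met because $\ell > S(w)$ and $\Delta > c_1\ell^2$ with $c_1$ large) yields $\mathbb{P}_w[A_w^c] \leq c_3\exp(-c_4\rho^2/(4\Delta)) \leq c_3 \Delta^{-\alpha}$ for any prescribed $\alpha$, by choosing $c_2$ large enough in terms of $\alpha$. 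In particular $p_w \in [1/2,1]$ for $w \in \{x,z\}$ once $\Delta$ is large.

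I would then decompose
\[
\frac{g(x,y)-g(z,y)}{\mu_y} = \frac{q_\Delta(x,y)-q_\Delta(z,y)}{p_x} + q_\Delta(z,y)\frac{p_z-p_x}{p_x p_z} - \frac{\mathbb{P}_x[Y_\Delta=y,A_x^c]}{\mu_y p_x} + \frac{\mathbb{P}_z[Y_\Delta=y,A_z^c]}{\mu_y p_z}.
\]
The first term carries the main contribution: I apply \Cref{prop:PHI} to the caloric function $(w,t)\mapsto q_t(w,y)$ with $x_0 = x$ and $r_0 = \sqrt{\Delta}$. Since $S(x) \leq \ell$ and $|x-z| \leq d\ell$, we have $\rho(x_0,x,z) \leq d\ell$, and by \Cref{prop:heat_kernel_bounds} the supremum of $q_t(\cdot,y)$ on $Q_+(x,\sqrt{\Delta},\Delta)$ is at most $C_{BH}\Delta^{-d/2}$. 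This gives
\[
|q_\Delta(x,y)-q_\Delta(z,y)| \leq c\,(d\ell/\sqrt{\Delta})^{\Theta}\cdot C_{BH}\Delta^{-d/2} \leq C\,\ell^\Theta\Delta^{-(d+\Theta)/2},
\]
which, after division by $p_x \geq 1/2$, is exactly the target bound. The remaining three terms are errors: using the heat-kernel upper bound for $q_\Delta(z,y)$, the uniform lower bound $\mu_y \geq (2dC_M)^{-1}$ coming from \Cref{ass:critical}, and $\mathbb{P}_w[A_w^c]\leq \Delta^{-\alpha}$, each is at most a constant times $\Delta^{-d/2}\Delta^{-\alpha}$, and is absorbed into the main bound as soon as $\alpha \geq \Theta/2+1$, since $\ell \geq 1$.

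The only care required is quantitative: $c_1$ must be chosen so that at scale $\sqrt{\Delta}$ all of \Cref{prop:heat_kernel_bounds}, \Cref{prop:PHI} and \Cref{prop:exit} are simultaneously applicable (in particular $|x-z|\leq d\ell \leq \sqrt{\Delta}/2$ so that $x,z \in B(x_0,r_0/2)$), and $c_2$ must be large enough, in terms of $d$ and $\Theta$, that the exit-time decay $\Delta^{-\alpha}$ beats the target polynomial order $\ell^\Theta\Delta^{-(d+\Theta)/2}$. Both conditions are straightforward under Assumptions \ref{ass:S} and \ref{ass:critical}, and the PHI step mirrors the one already carried out in (\ref{for:fbound2}) in the proof of \Cref{thrm:mixing}. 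I therefore do not anticipate any substantive obstacle; the lemma is essentially the statement that conditioning the walk to remain inside the large box $x+Q_\rho$ perturbs its transition density by a polynomially small amount, which is small enough to preserve the PHI-driven Hölder regularity in the starting point.
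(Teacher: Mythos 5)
Your proof is correct and follows essentially the same route as the paper: express $g$ in terms of the unconditioned heat kernel and the exit probability, apply the PHI-based H\"older estimate (\Cref{prop:PHI}) with $r_0=\sqrt{\Delta}$ to the dominant term $|q_\Delta(x,y)-q_\Delta(z,y)|$ with the supremum controlled by $\Delta^{-d/2}$ from \Cref{prop:heat_kernel_bounds}, and absorb the remaining terms via the exit-time bound (\Cref{prop:exit}) after making $c_2$ large. Your explicit four-term decomposition is in fact a touch more careful than the paper's indicator-function version, which treats the exit probability as starting-point independent; the only small inaccuracy is the claimed $\Delta^{-d/2}$ prefactor on the last two error terms, which the crude bound $\mathbb{P}_w[Y_\Delta=y,A_w^c]\le\mathbb{P}_w[A_w^c]$ does not supply, so one actually needs $\alpha\ge(d+\Theta)/2$ rather than $\Theta/2+1$ --- but this is harmless since $\alpha$ can be taken arbitrarily large through $c_2$.
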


\begin{remark}
	Note that the above bound has the same form as the one for the heat kernel of unconditioned random walks in \Cref{prop:PHI}, with the supremum being bounded above by the heat kernel bound from \Cref{prop:heat_kernel_bounds}. This allows us to extend \Cref{thrm:mixing} to random walks conditioned to have a bounded displacement during \([0,\Delta]\).
\end{remark}

\begin{proof}[Proof of \Cref{lem:gtoq}]
	Denote by \(p_E(\rho)\) the probability that a random walk started at \(x\) has displacement in \(Q_{\rho}\) during \([0,\Delta]\). From \Cref{prop:exit} , we have that if \(\Delta\) is sufficiently big, then
	\begin{align}
		1-p_{E}(\rho)&\leq \mathbb{P}_x[Y \textrm{ exits }B(x,\rho/2)\textrm{ during }[0,\Delta]]\nonumber\\
		&=\mathbb{P}_x(\tau(x,\rho/2)<\Delta)\nonumber\\
		&\leq c_a\exp\{-c_b\rho^2/\Delta\}.\label{for:displacement_bound}
	\end{align}
											
	Next, using \(h(x,y):=\mathbb{P}_x\left[Y_{\Delta}=y\,|\,Y\textrm{ exits }x+Q_{\rho}\textrm{ during }[0,\Delta]\right]\) and \(f_{\Delta}(x,y)=\mathbb{P}_x[Y_{\Delta}=y]\), we can write
	\[
		f_{\Delta}(x,y)=g(x,y)p_E(\rho)+h(x,y)(1-p_E(\rho)).
	\]
	With this we have
	\begin{equation}\label{for:exittime}
		g(x,y)\leq f_{\Delta}(x,y)\frac{1}{p_E(\rho)}.
	\end{equation}
											
	Then, we can write
	\begin{eqnarray*}
		\left|\frac{g(x,y)}{\mu_y}-\frac{g(z,y)}{\mu_y}\right|&=&\mathds{1}_{\{g(x,y)>g(z,y)\}}\left(\frac{g(x,y)}{\mu_y}-\frac{g(z,y)}{\mu_y}\right)\\
		&&+\mathds{1}_{\{g(x,y)<g(z,y)\}}\left(\frac{g(z,y)}{\mu_y}-\frac{g(x,y)}{\mu_y}\right)\\
		&\leq&\mathds{1}_{\{g(x,y)>g(z,y)\}}\left(\frac{f_{\Delta}(x,y)}{\mu_yp_E(\rho)}-\frac{f_{\Delta}(z,y)}{\mu_yp_E(\rho)}+\frac{h(z,y)(1-p_E(\rho))}{p_E(\rho)\mu_y}\right)\\
		&&+\mathds{1}_{\{g(x,y)<g(z,y)\}}\left(\frac{f_{\Delta}(z,y)}{\mu_yp_E(\rho)}-\frac{f_{\Delta}(x,y)}{\mu_yp_E(\rho)}+\frac{h(x,y)(1-p_E(\rho))}{p_E(\rho)\mu_y}\right)\\
		&\leq&\frac{|q_{\Delta}(y,x)-q_{\Delta}(y,z)|}{p_E(\rho)}+\frac{\max\{h(x,y),h(z,y)\}(1-p_E(\rho))}{p_E(\rho)\mu_y}.
	\end{eqnarray*}
	
	Note that \(h(x,y)\) can be written as \(f_{\Delta-\tau}(w,y)\), where \(\tau\) is the first time \(Y\) exists \(x+Q_{\rho}\) and \(w\) is the random vertex at the boundary of \(x+Q_{\rho}\) where \(Y\) is at time \(\tau\). Since the weights \(\mu_{x,y}\) satisfy (\ref{for:graphupperbound}) by \Cref{ass:S}, we have that \(\frac{f_{\Delta-\tau}(w,y)}{\mu_y}\) is at most some positive constant \(c\). This holds because either \(\Delta-\tau\) is larger than \(|w-y|\), which allows us to apply heat kernel bounds from \Cref{prop:heat_kernel_bounds}, or \(\Delta-\tau\) is smaller than \(|w-y|\) so \(f_{\Delta-\tau}(w,y)\) is bounded above by the probability that a random walk jumps at least \(|w-y|\) steps in time \(\Delta-\tau\), which is small enough since \(|w-y|\) is large. This gives that \(\frac{\max\{h(x,y),h(z,y)\}}{\mu_y}\) is at most \(c\). With this and (\ref{for:displacement_bound}) we obtain that
	\begin{align*}
		\frac{\max\{h(x,y),h(z,y)\}(1-p_E(\rho))}{\mu_y p_E(\rho)}&\leq \frac{c c_a}{p_E(\rho)}\exp\left\{\frac{-c_b\rho^2}{\Delta}\right\}\\
		&\leq\frac{c c_a}{p_E(\rho)}\exp\left\{-c_b c_2\log\Delta\right\}.
	\end{align*}
	By (\ref{for:displacement_bound}) we can just bound \(p_E(\rho)\geq 1/2\) above. Then,  applying \Cref{prop:A} to \(|q_{\Delta}(y,x)-q_{\Delta}(y,z)|\), and using \Cref{prop:heat_kernel_bounds} to bound the resulting supremum term, concludes the proof.
\end{proof}

The next theorem is an adaptation of \Cref{thrm:mixing} for conditioned random walks. Note that we need a stronger condition on \(K-K'\) below than in \Cref{thrm:mixing}.
\begin{thrm}\label{thrm:mixing2}
	Let \(\mu_{x,y}\) satisfy Assumptions \ref{ass:S} and \ref{ass:critical}. There exist constants \(c_0\), \(c_1\), \(C>0\) such that the following holds.
	Fix \(K>\ell>0\) and \(\epsilon\in(0,1)\). Consider the cube \(Q_K\) tessellated into subcubes \((T_i)_{i}\) of side length \(\ell\) and assume that \(\ell>S^{d+1}(x)\) for all \(x\in \tilde Q_K\). 
	Let \((x_j)_{j}\subset \tilde Q_K\) be the locations at time \(0\) of a collection of particles, such that each subcube \(\tilde T_i\) contains at least \(\sum_{y\in \tilde T_i}\beta\mu_y\) particles for some \(\beta>0\).
	Let \(\Delta\geq c_0\ell^2\epsilon^{-4/\Theta}\), where \(\Theta\) is as in \Cref{prop:PHI}.
	Fix \(K'>0\) such that \(K-K'\geq c_1\sqrt{\Delta\log\Delta}\). 
	For each \(j\), denote by \(Y_j\) the location of the \(j\)-th particle at time \(\Delta\), conditioned on having displacement in \(Q_{K-K'}\) during \([0,\Delta]\). 
	Then there exists a coupling \(\mathbb{Q}\) of an independent Poisson point process \(\psi\) with intensity measure \(\zeta(y)=\beta(1-\epsilon)\mu_y\), \(y\in \tilde Q_K\), and \((Y_j)_{j}\) such that within \(\tilde Q_{K'}\subset \tilde Q_K\), \(\psi\) is a subset of \((Y_j)_{j}\) with probability at least
	\[
		1-\sum_{y\in \tilde Q_{K'}}\exp\left\{-C\beta\mu_y\epsilon^2\Delta^{d/2}\right\}.
	\]
\end{thrm}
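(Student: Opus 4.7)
The plan is to follow the template of the proof of \Cref{thrm:mixing} essentially verbatim, with the conditioned density $g(x_j, y)$ playing the role of the unconditioned density $f_{\Delta}(x_j, y)$. First apply \Cref{prop:mixing} with densities $g_j(y) = g(x_j, y)$ to reduce the target coupling bound to a lower-tail estimate on the soft local time $H_J(y) = \sum_{j} \xi_j\, g(x_j, y)$, and use Markov's inequality to write
\[
\mathbb{Q}\bigl[H_J(y) < \beta\mu_y(1-\epsilon)\bigr] \leq \exp\{\kappa\beta\mu_y(1-\epsilon)\}\, \mathbb{E}^{\mathbb{Q}}\bigl[\exp\{-\kappa H'(y)\}\bigr],
\]
where $H'(y)$ restricts the sum to the subset $J'(y)$ of particles within distance $R = c\sqrt{\Delta}\,\epsilon^{-1/d}$ of $y$. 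The expectation factorises via independence of the $\xi_j$ (the particles remain independent under the per-particle displacement conditioning), and the Taylor estimate $\log(1+x) \geq x - x^2$ for $|x| \leq 1/2$ reduces the whole argument to two pointwise estimates on $g$.

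The first such estimate is the upper bound $g(x, y) \leq C\Delta^{-d/2}$ for $y$ within distance $R+\sqrt{d}\ell$ of $x$, which is needed to justify $|\kappa g(x,y)| \leq \epsilon/4$ when $\kappa = C'\epsilon\Delta^{d/2}$. It follows immediately from $g(x,y) \leq f_{\Delta}(x,y)/p_E(\rho)$ combined with the Gaussian heat-kernel bound from \Cref{prop:heat_kernel_bounds}, after using \Cref{prop:exit} together with the hypothesis $\rho = K-K' \geq c_1\sqrt{\Delta\log\Delta}$ to force $p_E(\rho) \geq 1/2$. The second estimate is the oscillation bound $|g(x,y) - g(x_j',y)| \leq C\mu_y \ell^{\Theta}\Delta^{-(d+\Theta)/2}$, which is precisely \Cref{lem:gtoq}. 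Since this has the same scaling as the PHI-plus-heat-kernel combination used in \Cref{thrm:mixing}, the control of $\sum_{j\in J'(y)}|g(x_j,y)-g(x_j',y)|$ carries over line by line.

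The one genuinely new step is the lower bound $\sum_{j\in J'(y)} g(x_j, y) \geq \beta\mu_y(1-\epsilon/2)$, the conditioned analogue of the sum bound in \Cref{thrm:mixing}. The decomposition via subcube maximisers $x_j'$ reduces this to showing $\sum_{z \in R(y)} \mu_z\, g(z,y) \geq \mu_y(1-\epsilon/4)$. From the identity $f_{\Delta} = g\, p_E + h(1-p_E)$ used inside the proof of \Cref{lem:gtoq}, together with the trivial bound $h \leq 1$ and $p_E(\rho) \geq 1/2$, one obtains the pointwise lower bound $g(z,y) \geq f_{\Delta}(z,y) - 2c_a e^{-c_b\rho^2/\Delta}$. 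Summing and using reversibility of the unconditioned walk yields $\sum_{z\in R(y)} \mu_z f_{\Delta}(z,y) = \mu_y\mathbb{P}_y[Y_{\Delta} \in R(y)] \geq \mu_y(1-\epsilon/8)$ via \Cref{prop:exit}, exactly as in \Cref{thrm:mixing}, while the error term contributes at most $O(R^d)\cdot e^{-c_b\rho^2/\Delta} = O(\Delta^{d/2 - c_b c_1^2}\epsilon^{-1})$. This book-keeping is the main (albeit minor) obstacle: choosing $c_1$ large enough so that $c_b c_1^2 > d/2 + \Theta/2$ makes the error polynomially small in $\Delta$ and absorbable into $\mu_y\epsilon/8$ under the hypothesis $\Delta \geq c_0\ell^2\epsilon^{-4/\Theta}$, and this is precisely why the condition on $K-K'$ must be strengthened from $\sqrt{\Delta}c_1\epsilon^{-1/d}$ in \Cref{thrm:mixing} to $c_1\sqrt{\Delta\log\Delta}$ here. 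Once these two pointwise ingredients are in place, the remainder of the argument is a mechanical transcription of the proof of \Cref{thrm:mixing}.
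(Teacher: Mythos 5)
Your proposal is correct and follows the same route as the paper, which dispatches this theorem in a single sentence by pointing at \Cref{lem:gtoq} and the bound $g\leq f_\Delta/p_E$ from (\ref{for:exittime}) and declaring "the proof goes in the same way." You have filled in exactly what that sentence elides: the upper bound $g\leq 2f_\Delta$ via $p_E\geq 1/2$, the oscillation bound via \Cref{lem:gtoq}, and — the only step requiring a genuinely new calculation — the lower bound on $\sum_{j\in J'(y)} g(x_j,y)$, which you correctly reduce to the pointwise estimate $g(z,y)\geq f_\Delta(z,y)-2(1-p_E)$ (using $h\leq 1$ and $p_E\geq 1/2$), reversibility of $f_\Delta$, and absorption of the resulting $O(R^d e^{-c_b\rho^2/\Delta})$ error by taking $c_b c_1^2>d/2+\Theta/2$, which is precisely what forces the stronger $K-K'\geq c_1\sqrt{\Delta\log\Delta}$ hypothesis. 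The one small omission is the paper's closing remark that the coupling is measurable with respect to $\tilde Q_K$ alone, because a conditioned walk ending in $\tilde Q_{K'}$ never leaves $\tilde Q_K$; this locality property is the actual motivation for introducing the conditioned version, so it deserves a sentence, but it does not affect the correctness of the probabilistic estimate you derive.
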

				
\begin{proof}
	Using \Cref{lem:gtoq} and (\ref{for:exittime}) when setting \(\kappa\), the proof goes in the same way as the proof of \Cref{thrm:mixing}. The independence from \(G\) outside of \(\tilde Q_{K}\) follows from the fact that we only consider particles which have displacement in \(Q_{K-K'}\) and ended in \(\tilde Q_{K'}\), so that they never left \(\tilde Q_{K}\) during \([0,\Delta]\). 
\end{proof}
		
\subsection{Extension to other graphs}\label{rem:general}
		
We have shown that the local mixing result of Theorems \ref{thrm:mixing} and \ref{thrm:mixing2} work for \(\mathbb{Z}^d\), but they can easily be extended to the more general graphs defined in \Cref{sect:setup}, as long as Assumptions \ref{ass:S} and \ref{ass:critical} hold.

We start with a region \(A\subseteq \mathcal{C}_{\infty}\) around the origin of \(G\) and tesselate it into tiles \((T_i)_{i\in I}\) of diameter at most \(\ell\). Let \(\Delta\) be as in \Cref{thrm:mixing}. Let \(A'\subset A\) be all the sites in \(A\) that are at least \(\sqrt{\Delta}c_1\epsilon^{-1/d}+c\ell\) away from the boundary of \(A\). Then, if \(A'\) is not empty, using the same steps as in the proof of \Cref{thrm:mixing}, if each tile \(T_i\) of \(A\) contains at least \(\beta\sum_{y\in T_i}\mu_y\) particles at time \(0\), it holds that in the region \(A'\), there is a coupling with an independent Poisson point process \(\psi\) of intensity measure \(\zeta(y)=\beta(1-\epsilon)\mu_y\) such that at time \(\Delta\) the particles inside \(A'\) are contained in \(\psi\) with probability at least
\[
	1-\sum_{y\in A'}\exp\left\{-C\beta\mu_y\epsilon^2\Delta^{d/2}\right\},
\]
for some constant \(C>0\).

Furthermore, \Cref{thrm:mixing2} can analogously be extended in the same way, if we require that \(A'\) contains only sites that are at least \(c_1\sqrt{\Delta\log\Delta}\) away from the boundary of \(A\), for some constant \(c_1\), and if we condition the random walks to have their displacement limited to a ball of radius \(c_1\sqrt{\Delta\log\Delta}\).

\section{Spread of the infection}\label{sect:spread}

Our goal in this section will be to use \Cref{thrm:mixing2} in order to show that on the graph \(G=(V,E)\) with \(V=\mathbb{Z}^d\) and \(E=\{(x,y):\|x-y\|_1=1\}\), and with \(\mu_{x,y}\), \((x,y)\in E\) being i.i.d.\ and satisfying (\ref{eq:mu_bounds_new}), information spreads with positive speed in any direction, as claimed in Theorems \ref{thrm:spread} and \ref{thrm:spread_recov}. In this setting, \Cref{prop:A} guarantees that \Cref{ass:S} holds with \(S(x)\equiv 1\) and since \(\mu_{x,y}\neq 0\) for all \((x,y)\in E\), we also have that \Cref{ass:critical} holds. 

Recall that we assume \(d\geq 2\). Tessellate \(\mathbb{Z}^d\) into cubes of side length \(\ell\), indexed by \(i\in \mathbb{Z}^d\). Next, tessellate time into intervals of length \(\beta\), indexed by \(\tau\in\mathbb{Z}\). With this we denote by the space-time cell \((i,\tau)\in\mathbb{Z}^{d+1}\) the region \(\prod_{j=1}^d[i_j\ell,(i_j+1)\ell]\times[\tau\beta,(\tau+1)\beta]\). In the following, \(\beta\) is set as a function of \(\ell\) so that the ratio \(\beta/\ell^2\) is fixed first to be a small constant, and then \(\ell\) is set sufficiently large.

We will use a result from \cite{Gracar2016a} that gives the existence of a Lipschitz connected surface (cf. Definitions \ref{def:lip_fun} and \ref{def:lip_surf} below) that surrounds the origin and which is composed of space-time cells, for which a certain local event holds. This will allow us to obtain an infinite sequence of space-time cells, such that the infection spreads from one cell to the next.

In order to obtain this result, we will need to consider overlapping space-time cells. Let \(\eta\geq 1\) be an integer which will represent the amount of overlap between cells. For each cube \(i=(i_1,\dots,i_d)\) and time interval \(\tau\), define the \emph{super cube} \(i\) as \(\prod_{j=1}^d[(i_j-\eta)\ell,(i_j+\eta+1)\ell]\) and the \emph{super interval} \(\tau\) as \([\tau\beta,(\tau+\eta)\beta]\). We define the \emph{super cell} \((i,\tau)\) as the Cartesian product of the super cube \(i\) and the super interval \(\tau\).

In the following we will say a particle has displacement inside \(X'\) during a time interval \([t_0,t_0+t_1]\), if the location of the particle at all times during \([t_0,t_0+t_1]\) is inside \(x+X'\), where \(x\) is the location of the particle at time \(t_0\). For each time $s\geq 0$, let $\Pi_s$ be a point process on V, which represents the locations of the particles at time $s$. We say an event \(E\) is \emph{increasing} for \((\Pi_s)_{s\geq 0}\) if the fact that \(E\) holds for \((\Pi_s)_{s\geq 0}\) implies that it holds for all \((\Pi'_s)_{s\geq 0}\) for which \(\Pi_s'\supseteq \Pi_s\) for all \(s\geq 0\). We say an event \(E\) is \emph{restricted} to a region \(X\subset\mathbb{Z}^d\) and a time interval \([t_0,t_1]\) if it is measurable with respect to the \(\sigma\)-field generated by all the particles that are inside \(X\) at time \(t_0\) and their positions during \([t_0,t_1]\). For an increasing event \(E\) that is restricted to a region \(X\) and time interval \([t_0,t_1]\), we have the following definition.

\begin{mydef}\label{def:probassoc}
	\(\nu_E\) is called the \emph{probability associated} to a an increasing event \(E\) that is restricted to \(X\)  and a time interval \([t_0, t_0+t_1]\) if, for an intensity measure \(\zeta\), \(\nu_E(\zeta,X,X',t_1)\) is the probability that \(E\) happens given that, at time \(t_0\), the particles in \(X\) are given by a Poisson point process of intensity measure \(\zeta\) and their motions from \(t_0\) to \(t_0+t_1\) are independent continuous time random walks on the weighted graph \((G,\mu)\), where the particles are conditioned to have displacement inside \(X'\).
\end{mydef}

For each \((i,\tau)\in\mathbb{Z}^{d+1}\), let \(E_{\mathrm{st}}(i,\tau)\) be an increasing event restricted to the super cube \(i\) and the super interval \(\tau\). Here the subscript \(\mathrm{st}\) refers to space-time. We say that a cell \((i,\tau)\) is \emph{bad} if \(E_{\mathrm{st}}(i,\tau)\) does not hold and \emph{good} otherwise.

We will need a different way to index space-time cells, which we refer to as the \emph{base-height index}. In the base-height index, we pick one of the \(d\) spatial dimensions and denote it as \emph{height}, using index \(h\in\mathbb{Z}\), while the remaining \(d\) space-time dimensions form the base, which we index by \(b\in\mathbb{Z}^d\). In this way, for each space-time cell \((i,\tau)\) there will be \((b,h)\in\mathbb{Z}^{d+1}\) such that the base-height cell \((b,h)\) corresponds to the space-time cell \((i,\tau)\)

We analogously define the \emph{base-height super cell} \((b,h)\) to be the space-time super cell \((i,\tau)\), for which the base-height cell \((b,h)\) corresponds to the space-time cell \((i,\tau)\). Similarly, we define \(E_\mathrm{bh}(b,h)\), the increasing event restricted to the super cell \((b,h)\) that is the same as the event \(E_{\mathrm{st}}(i,\tau)\) for the space-time super cell \((i,\tau)\) that corresponds to the base-height super cell \((b,h)\). Here, the subscript \(\mathrm{bh}\) refers to the base-height index.

In order to prove Theorems \ref{thrm:spread} and \ref{thrm:spread_recov}, we will need a theorem from \cite{Gracar2016a}, which gives the existence of a two-sided Lipschitz surface \(F\).

\begin{mydef}\label{def:lip_fun}
	A function \(F:\mathbb{Z}^d\rightarrow \mathbb{Z}\) is called a \emph{Lipschitz function}
	 if \(|F(x)-F(y)|\leq 1\) whenever \(\|x-y\|_1 = 1\).
\end{mydef}

\begin{mydef}\label{def:lip_surf}
	A \emph{two-sided Lipschitz surface} \(F\) is a set of base-height cells \((b,h)\in\mathbb{Z}^{d+1}\) such that for all \(b\in\mathbb{Z}^d\) there are exactly two (possibly equal) integer values \(F_+(b)\geq 0\) and \(F_-(b)\leq0\) for which \((b,F_+(b)),(b,F_-(b))\in F\) and, moreover, \(F_+\) and \(F_-\) are Lipschitz functions.
\end{mydef}

We say a space-time cell \((i,\tau)\) belongs to \(F\) if there exists a base-height cell \((b,h)\in F\) that corresponds to \((i,\tau)\). We say a two-sided Lipschitz surface \(F\) is finite, if for all $b\in\mathbb{Z}^d$, we have $F_+(b)<\infty$ and $F_-(b)>-\infty$. For a positive integer $D$, we say a two-sided Lipschitz surface \emph{surrounds} a cell \((b',h')\) at distance \(D\) if any path \((b',h')=(b_0,h_0),(b_1,h_1),\dots,(b_n,h_n)\) for which \(\|(b_i,h_i)-(b_{i-1},h_{i-1})\|_1=1\) for all \(i\in\{1,\dots n\}\) and \(\|(b_n,h_n)-(b_0,h_0)\|_1>D\), intersects with \(F\).

We now present the main result from our paper \cite{Gracar2016a}, which holds for graphs where a local mixing result, such as the one in \Cref{thrm:mixing2}, hold. More precisely, for a graph satisfying Assumption \ref{ass:S} and (\ref{eq:mu_bounds_new}) (which implies \Cref{ass:critical} holds) we have that \Cref{thrm:mixing2} holds (with \(S(x)=1\) for all \(x\in V\)), which in turn gives that the following result from \cite{Gracar2016a} holds. Recall that, for any \(\rho\geq 2\), \(Q_{\rho}\) stands for the cube \([-\rho/2,\rho/2]^d\), and  that \(\lambda\) is the intensity measure of the Poisson point process of particles as defined in \Cref{sect:intro}.


\begin{thrm}\label{thrm:exp_tail}
	Let \(\mathcal{G}=(G,\mu)\) be a graph satisfying Assumption \ref{ass:S} and (\ref{eq:mu_bounds_new}) on the lattice \(\mathbb{Z}^d\) for \(d\geq 2\). There exist positive constants \(c_1\) and \(c_2\) such that the following holds. Tessellate \(G\) in space-time cells and super cells as described above for some \(\ell,\beta,\eta>0\) such that the ratio \(\beta/\ell^2\) is small enough. 
	Let \(E_{\mathrm{st}}(i,\tau)\) be an increasing event, restricted to the space-time super cell \((i,\tau)\). 
	Fix \(\epsilon\in(0,1)\) and fix \(w\) such that
	\[
		w\geq\sqrt{\frac{\eta\beta}{c_2\ell^2}\log\left(\frac{8c_1}{\epsilon}\right)}.
	\] 
	Then, there exists a positive number \(\alpha_0\) that depends on \(\epsilon\), \(\eta\) and that ratio \(\beta/\ell^2\) so that if
	\[
	\min\left\{C_{M}^{-1}\epsilon^2\lambda_0\ell^d,\log\left(\frac{1}{1-\nu_{E_{\mathrm{st}}}((1-\epsilon)\lambda,Q_{(2\eta+1)\ell},Q_{w\ell},\eta\beta)}\right)\right\}\geq\alpha_0,
	\]
	a two-sided Lipschitz surface \(F\) where \(E_{\mathrm{st}}(i,\tau)\) holds for all \((i,\tau)\in F\) exists. Furthermore, the surface is finite almost surely and surrounds the origin at a finite distance almost surely.
\end{thrm}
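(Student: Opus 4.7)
The plan is to reduce \Cref{thrm:exp_tail} to the existence of a two-sided Lipschitz surface in a dependent Bernoulli-type field on $\mathbb{Z}^{d+1}$ whose one-cell marginal is close to $1$ and whose dependence range is finite. The key reason \Cref{thrm:mixing2} is used rather than \Cref{thrm:mixing} is that conditioning the particles on bounded displacement is what produces genuine finite-range dependence across super cells, which is what a Peierls-style construction will need.

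First, I would bound the probability that a single super cell $(i,\tau)$ is bad. The event $E_{\mathrm{st}}(i,\tau)$ is driven by particles at time $\tau\beta$, whose locations form a Poisson point process of intensity $\lambda$. Applying a Chernoff bound for sums of independent Poissons together with the uniform ellipticity $\mu_y\geq C_M^{-1}$, the probability that some $\ell$-subcube of the super cube contains fewer than $(1-\epsilon)\lambda_0\sum_{y\in T_j}\mu_y$ particles is at most $\exp(-c\,C_M^{-1}\epsilon^2\lambda_0\ell^d)$, which is exactly the first term in the minimum of the hypothesis. On the complementary \emph{density event}, I would invoke \Cref{thrm:mixing2} with $K=(2\eta+1)\ell$, $\Delta=\eta\beta$ and $K-K'=w\ell$: the assumption that $\beta/\ell^2$ is small accommodates $\Delta\geq c_0\ell^2\epsilon^{-4/\Theta}$, and the lower bound imposed on $w$ is precisely calibrated so that $w\ell\geq c_1\sqrt{\Delta\log\Delta}$. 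The coupling produced by \Cref{thrm:mixing2} then dominates the displacement-conditioned particles inside $Q_{w\ell}$ by an independent Poisson process of intensity $(1-\epsilon)\lambda$ with error probability of order $\exp(-C\epsilon^2\lambda_0\ell^d)$; since $E_{\mathrm{st}}$ is increasing, its probability under the conditioned dynamics is at least $\nu_{E_{\mathrm{st}}}((1-\epsilon)\lambda,Q_{(2\eta+1)\ell},Q_{w\ell},\eta\beta)$ minus this coupling error, and the total failure probability is bounded by a constant multiple of $e^{-\alpha_0}$.

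Second, I would establish the finite-range dependence of the good-cell indicators. Because the conditioned random walks never leave $x_j+Q_{w\ell}$, the indicator of $E_{\mathrm{st}}(i,\tau)$ is measurable with respect to the Poisson point process in a bounded spatio-temporal window around the super cell. Any two base-height super cells whose indices differ in some coordinate by more than a constant $R=R(\eta,w)$ are therefore strictly independent. By the Liggett--Schonmann--Stacey domination theorem, the good-cell field on $\mathbb{Z}^{d+1}$ stochastically dominates a Bernoulli site percolation field with parameter $p(\alpha_0)\to 1$ as $\alpha_0\to\infty$.

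Finally, I would prove the existence of a two-sided Lipschitz surface of good cells surrounding the origin through a Peierls argument tailored to the Lipschitz geometry of \Cref{def:lip_fun,def:lip_surf}. The number of Lipschitz graphs $(b,F_{\pm}(b))$ of size $n$ separating the origin from infinity in a fixed height direction grows at most exponentially in $n$, while the probability that every cell on such a surface is bad is at most $(1-p)^n$ by the stochastic domination from the previous step; enlarging $\alpha_0$ makes the expected number of bad Lipschitz contours summable, yielding almost sure existence of a finite surrounding surface on each side, and hence of the two-sided object $F$. The main obstacle, in my view, is precisely this last step: one must carry out a contour count adapted to Lipschitz graphs over $\mathbb{Z}^d$ (rather than standard $*$-connected percolation contours) while only relying on finite-range (not i.i.d.)\ dependence, which is the genuinely new combinatorial input of \cite{Gracar2016a} beyond classical multi-scale percolation.
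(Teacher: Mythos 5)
The paper does not actually prove \Cref{thrm:exp_tail}: it is imported wholesale from the companion paper \cite{Gracar2016a}, so there is no in-document proof to compare against. Evaluating your proposal on its own terms, however, there is a genuine gap in Step 2 that the whole rest of the argument depends on.

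You claim that ``any two base-height super cells whose indices differ in some coordinate by more than a constant $R=R(\eta,w)$ are therefore strictly independent,'' and you then invoke Liggett--Schonmann--Stacey to dominate by a high-density Bernoulli field. This is false, and the failure is precisely the core difficulty of the problem, as the introduction of the paper stresses. The indicators of $E_{\mathrm{st}}(i,\tau)$ and $E_{\mathrm{st}}(i,\tau')$ at the same spatial index are driven by the \emph{same particles} viewed at different times: conditioning the individual jumps to stay inside $x_j+Q_{w\ell}$ bounds each walk's spatial footprint during one super interval, but it does not make the particle configuration at time $\tau'\beta$ independent of that at time $\tau\beta$. In fact, if a ball of radius $R$ is empty of particles at time $0$, it remains essentially empty up to time $\sim R^2$ with probability bounded away from zero, so the probability that a space-time box of volume $n\sim R^{d+2}$ is entirely bad decays only like $\exp\{-cn^{d/(d+2)}\}$, a stretched exponential. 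No Bernoulli field with a fixed parameter $p<1$ exhibits such tails, so the domination you want is simply not available, and the subsequent Peierls argument cannot be set up over a single scale. The companion paper \cite{Gracar2016a} handles exactly this temporal long-range dependence with a multi-scale framework (the paper explicitly says its purpose is so that users need not ``carry out a multi-scale analysis from scratch''), in which \Cref{thrm:mixing2} supplies the decoupling inside the recursion rather than sitting inside an i.i.d.-style contour count.

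A secondary issue is in Step 1: you apply the density event plus the \Cref{thrm:mixing2} coupling to each super cell as if these inputs were fresh, but the density events at $\tau\beta$ and $(\tau+1)\beta$ at the same site refer to the same stochastic process at nearby times and are strongly positively correlated. This correlation is again managed by the multi-scale structure of \cite{Gracar2016a}, not by a per-cell union bound. Your observation about calibrating $w$ so that $w\ell\geq c_1\sqrt{\Delta\log\Delta}$ and that \Cref{thrm:mixing2} rather than \Cref{thrm:mixing} is needed in order to localize the coupling in space is correct and in line with the role those results play; but localization in space does not give independence in time, which is the step where the proposal breaks.
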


Recall that we want to show that the infection spreads with positive speed. Given a space-time tessellation of \(G\) and a local increasing event \(E_{\mathrm{st}}\), \Cref{thrm:exp_tail} gives the existence of a Lipschitz surface \(F\) on which \(E_{\mathrm{st}}\) holds. Let \(T=\ell^{5/3}\). We will define the increasing event \(E_{\mathrm{st}}(i,\tau)\) to represent a single infected particle in the middle of the super cube \(i\) at time \(\tau\beta\) infecting a large number of particles in that super cube by time \(\tau\beta+T\), after which the infected particles move up to time $(\tau+1)\beta$, spreading to all of the cubes contained in the super cube.

Let \((i,\tau)\) be a space-time cell as defined previously. We consider that there is an infected particle in the center cube of the super cube \(i\) at time \(\tau\beta\), that is,  the particle is inside \(\prod_{j=1}^d[i_j\ell,(i_j+1)\ell]\). Starting from time \(\tau\beta\), we let the infected particle move and infect sufficiently many other particles by time \(\tau\beta+T\). This is given in the lemma below.

\begin{lemma}\label{lemma:particles2}
	There exist positive constant \(C_1\) such that the following holds for all large enough \(\ell\).
	Let \(Q^*=\prod_{j=1}^d[(i_j-\eta)\ell,(i_j+\eta+1)\ell]\) and let \(\left(\rho(t)\right)_{\tau\beta\leq t\leq \tau\beta+T}\) be the path of an infected particle that starts in \(\prod_{j=1}^d[i_j\ell,(i_j+1)\ell]\) and stays inside \(\prod_{j=1}^d[(i_j-\eta+1)\ell,(i_j+\eta)\ell]\) during \([\tau\beta,\tau\beta+T]\). Assume that at time \(\tau\beta\), the number of particles at each vertex \(x\in Q^*\setminus\rho(\tau\beta)\) is a Poisson random variable of mean \(\frac{\lambda_0}{2}\mu_x\).
	Let $\Upsilon$ be the subset of those particles that do not leave $Q^*$ during $[\tau\beta,\tau\beta+T]$, and let $\Upsilon'\subset \Upsilon$ be the particles colliding with the path $\rho$, that is, for each particle of $\Upsilon'$ there exists a time $t\in[\tau\beta,\tau\beta+T]$ such that the particle is located at $\rho(t)$. Then, $\Upsilon'$ is a Poisson random variable of mean at least
	\(
		C_1\lambda_0\ell^{1/3}.
	\)
\end{lemma}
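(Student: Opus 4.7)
The plan is to combine Poisson thinning with a Bonferroni first-moment argument on the trajectories of the background particles. Shift time so that $\tau\beta=0$; then at time $0$ the particles in $Q^*\setminus\{\rho(0)\}$ form a Poisson point process of intensity $\tfrac{\lambda_0}{2}\mu_x$, and each particle subsequently performs an independent random walk. Thus $\Upsilon'$ is an independent thinning of this cloud, where each particle is kept according to the trajectory-measurable event that it stays inside $Q^*$ during $[0,T]$ and visits $\rho(t)$ at some time $t\in[0,T]$. It follows automatically that $|\Upsilon'|$ is Poisson with mean
\[
E|\Upsilon'|\;=\;\sum_{x\in Q^*}\tfrac{\lambda_0\mu_x}{2}\,P_x\bigl[\exists\,t\in[0,T]:X_t=\rho(t),\;X_s\in Q^*\;\forall\,s\in[0,T]\bigr],
\]
and it suffices to lower bound this by $C_1\lambda_0\ell^{1/3}$.

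To do so I would introduce a time grid $t_i=i\Delta t$, $i=0,1,\dots,k-1$, with $\Delta t=\ell^{4/3}$ and $k=\lfloor T/\Delta t\rfloor=\lfloor\ell^{1/3}\rfloor$, and set $A_i(x)=\{X_{t_i}=\rho(t_i)\}\cap\{X_s\in Q^*\ \forall\,s\in[0,T]\}$. Bonferroni's inequality gives
\[
P_x\bigl[\text{collide and stay in }Q^*\bigr]\;\ge\;\sum_{i=0}^{k-1}P_x[A_i(x)]-\sum_{0\le i<j\le k-1}P_x[A_i(x)\cap A_j(x)].
\]
The main term is controlled using the reversibility identity $\sum_x\mu_x P_x[X_t=y]=\mu_y\ge 1/C_M$ (since $\mu$ is the stationary measure). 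Restricting the outer sum to starting points $x$ in a sub-cube of $Q^*$ at $L^1$-distance at least $\ell/2$ from $\partial Q^*$ (which still contains the full path $\rho$ by the assumed containment of $\rho$), the Gaussian heat-kernel bound of Proposition~\ref{prop:heat_kernel_bounds} shows that removing the far-away starting points costs only $O(\exp(-c\ell^{1/3}))$ because $\ell^2/T=\ell^{1/3}$, and the exit-time estimate of Proposition~\ref{prop:exit} shows that such a walk leaves $Q^*$ during $[0,T]$ with the same $O(\exp(-c\ell^{1/3}))$ probability. Consequently $\sum_x\mu_x P_x[A_i(x)]\ge c>0$ uniformly in $i$, yielding $\sum_i\sum_x\mu_x P_x[A_i(x)]\ge ck=\Omega(\ell^{1/3})$.

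For the Bonferroni correction I use the Markov property followed by Proposition~\ref{prop:heat_kernel_bounds}:
\[
\sum_x\mu_x P_x[A_i(x)\cap A_j(x)]\;\le\;\mu_{\rho(t_i)}\,P_{\rho(t_i)}\!\bigl[X_{t_j-t_i}=\rho(t_j)\bigr]\;\le\;C(t_j-t_i)^{-d/2}.
\]
Summing over pairs yields $O((k\log k)/\Delta t)$ when $d=2$ and $O(k/\Delta t^{d/2})$ when $d\ge 3$; both are $o(k)$ for $\Delta t=\ell^{4/3}$ and $k=\ell^{1/3}$. Combining the two estimates and multiplying by $\lambda_0/2$ gives $E|\Upsilon'|\ge C_1\lambda_0\ell^{1/3}$, which together with the Poisson claim proves the lemma.

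The principal difficulty is that one must count \emph{distinct} particles rather than total collisions: a naive first-moment computation of collisions via the same reversibility identity produces $\Theta(\lambda_0 T)=\Theta(\lambda_0\ell^{5/3})$, which vastly over-counts $|\Upsilon'|$ because a single background walk can meet $\rho$ many times. The tuning $\Delta t=\ell^{4/3}$ is chosen precisely so that after discretising one retains $k=\ell^{1/3}$ collision opportunities while the pairwise hit probability $\Delta t^{-d/2}$ at this spacing is small enough to make the Bonferroni correction strictly of lower order.
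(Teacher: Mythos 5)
Your proposal is correct and follows essentially the same route as the paper: both discretize time at the scale $W=\Delta t=\ell^{4/3}$, apply reversibility ($\sum_x\mu_x P_x[X_t=\rho(t_i)]=\mu_{\rho(t_i)}$) together with the exit-time and heat-kernel bounds of Propositions~\ref{prop:exit} and~\ref{prop:heat_kernel_bounds} to make the per-grid-point contribution $\Theta(1)$, and then control over-counting via the pairwise hit probabilities $\lesssim (t_j-t_i)^{-d/2}$. The only cosmetic difference is that the paper organizes the correction as a ``last collision among $\{t_1,\dots\}$'' decomposition while you invoke Bonferroni directly on $\bigcup_i A_i$; after the union bound inside the paper's decomposition, the two give the identical estimate.
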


\begin{proof}
	For each time $t\in [\tau\beta,\tau\beta+T]$, let $\Psi_t$ be the Poisson point process on $V$ giving the locations at time $t$ of the particles that belong to $\Upsilon$. Since the particles that start in \(Q^*\) move around and can leave \(Q^*\), we need to find a lower bound for the intensity of \(\Psi_t\) for times in \([\tau\beta,\tau\beta+T]\). Note that the infected particle we are tracking is not part of \(\Psi\), since \(\Psi\) does not include particles located at \(\rho(\tau\beta)\) at time \(\tau\beta\). 
	
	We will need to apply heat kernel bounds from \Cref{prop:heat_kernel_bounds} to the particles in \(Q^*\), so we need to ensure that the time intervals we consider are large enough for the proposition to hold. 
	We will only consider times \(t\in[\ell^{4/3},T]\) so that for large enough \(\ell\), \(t\geq\sup_{\substack{x\in Q^*\\y\in Q^*}}\|x-y\|_1\) and so the heat kernel bounds from \Cref{prop:heat_kernel_bounds} hold. Then, we have that for all sites \(x\in Q^*\) that are at least \(\ell\) away from the boundary of \(Q^*\) and at any such time \(t\) the intensity of \(\Psi_{\tau\beta+t}\) at vertex $x\in V$ is at least
	\begin{equation*}
		\psi(x,\tau\beta+t)\geq\sum_{\substack{y\in Q^*\\y\neq \rho(\tau\beta)}}\tfrac{\lambda_0}{2}\mu_y\cdot\mu_xq_t(y,x)
		= \tfrac{\lambda_0}{2}\mu_x\sum_{\substack{y\in Q^*\\y\neq \rho(\tau\beta)}}\mathbb{P}_x[Y_t=y],
	\end{equation*}
	where we used in the last step that the heat kernel \(q_t\) is symmetric. We now use the exit time bound from \Cref{prop:exit}  to get that 
	\[
		\sum_{\substack{y\in Q^*}}\mathbb{P}_x[Y_t=y]\geq 1-c_3\exp\{-c_4\ell^2/t\}.
	\]
	Next, we use that \(\mathbb{P}_x[Y_t=y]=\mu_y q_t(x,y)\leq C_M q_t(x,y)\), and use \Cref{prop:heat_kernel_bounds} to account for the particles at \(\rho(\tau\beta)\), yielding 
	\begin{equation*}
		\sum_{\substack{y\in Q^*\\y\neq \rho(\tau\beta)}}\mathbb{P}_x[Y_t=y]\geq 1-c_3\exp\left\{-c_4\ell^2/t\right\}-C_Mc_5t^{-d/2}.
	\end{equation*}
	This gives that for any \(t\in[\ell^{4/3},T]\), the intensity of \(\Psi_{\tau\beta+t}\) is at least
	\[
		\psi(x,\tau\beta+t)\geq \tfrac{\lambda_0}{2} \mu_x(1-c_3\exp\{-c_4\ell^2/T\}-C_Mc_5\ell^{-2d/3}).
	\]

	Let \([\tau\beta,\tau\beta+T]\) be divided into subintervals of length \(W\in(0,T]\), where we set \(W=\ell^{4/3}\) so that it is large enough to allow the use of the heat kernel bounds from \Cref{prop:heat_kernel_bounds}. Let \(J=\{1,\dots, \lfloor T/W\rfloor \}\) and \(t_j:=\tau\beta+jW\). Then the intensity of particles that share a site with the initially infected particle only at one time among \(\{t_1, t_2, \dots, t_{ \lfloor T/W\rfloor }\}\) is at least
	\begin{align*}
		&\sum_{j\in J}\psi(\rho(t_j),t_j)\mathbb{P}_{\rho(t_j)}[X_{r-t_j}\neq \rho(r) \;\forall r\in\{t_{j+1},\dots,t_{\lfloor T/W\rfloor}\}]\\
		&\geq \tfrac{\lambda_0}{2} C_M^{-1}(1-c_3\exp\{-c_4\ell^2/T\}-C_Mc_5\ell^{-2d/3})\sum_{j\in J}\left(1-\sum_{z>j}\mathbb{P}_{\rho(t_j)}[X_{t_z-t_j}= \rho(t_z)]\right).
	\end{align*}

	We want to make all of the terms of the sum over \(J\) positive, so we consider the term \(\sum_{z>j}\mathbb{P}_{\rho(t_j)}[X_{t_z-t_j}= \rho(t_z)]\) and show that it is smaller than \(\frac{1}{2}\) for large enough \(\ell\). To do this, we use that \(\mathbb{P}_x[Y_t=y]=\mu_y q_t(x,y)\) with the heat kernel bounds from \Cref{prop:heat_kernel_bounds}, which hold when \(W\geq\ell^{4/3}\) and \(\ell\) is large enough, to bound it from above by
	\begin{align}
		\sum_{z>j}\mathbb{P}_{\rho(t_j)}[X_{t_z-t_j}= \rho(t_z)]
		&\leq \sum_{z>j}C_MC_{HK}(t_z-t_j)^{-d/2}\nonumber\\
		&\leq C_M C_{HK}W^{-d/2}\sum_{z=1}^{T/W-j}z^{-d/2}\label{for:TW}
	\end{align}
	where \(C_{HK}\) is the constant coming from \Cref{prop:heat_kernel_bounds}. Then, (\ref{for:TW}) can be bound from above by
	\begin{equation}
		C_M C_{HK} W^{-d/2}\left(2+\sum_{z=3}^{T/W-j}z^{-d/2}\right)
		\leq C_M C_{HK}W^{-d/2}\left(2+\int_{2}^{T/W}z^{-d/2}dz\right)\label{for:TWintegral}.
	\end{equation}
	Let \(C\) be a constant that can depend on \(C_{HK}\), \(C_M\) and \(d\). Then for \(d=2\), (\ref{for:TWintegral}) it is smaller than \(CW^{-1}\log(T/W)\), and for \(d\geq 3\) the expression in (\ref{for:TWintegral}) is smaller than \(CW^{-d/2}\). Thus, setting \(\ell\) large enough, both terms are smaller than \(\frac{1}{2}\).

	Then, as a sum of Poisson random variables, we get that \(\Upsilon'\) is a Poisson random variable with a mean at least
	\[
		\tfrac{\lambda_0}{2} C_M^{-1}(1-c_3\exp\{-2c_4\ell^2/T\}-C_Mc_5\ell^{-2d/3})\tfrac{T}{2W}.
	\]
	Using that \(T=\ell^{5/3}\) and setting \(\ell\) large enough establishes the lemma, with \(C_1\) being any constant satisfying \(C_1<\frac{C_M^{-1}}{4}\).
\end{proof}

Next we show that the particles from \Cref{lemma:particles2} move to nearby cells, spreading the infection.

\begin{lemma}\label{lemma:particles1}
	Let \(z=(z_1,\dots,z_d)\) with \(z_j\in\{-\eta,-\eta+1,\dots,\eta\}\) for all \(j\in\{1,\dots d\}\), and fix the ratio \(\beta/\ell^2\). Let \(A(i,\tau,N,z)\) be the event that given a set of \(N>0\) particles in \(\prod_{j=1}^d[(i_j-\eta)\ell,(i_j+\eta+1)\ell]\) at time \(\tau\beta+T\),
	at least one of them is in \(\prod_{j=1}^d[(i_j+z_j)\ell,(i_j+z_j+1)\ell]\) at time \((\tau+1)\beta\). Then, if \(\ell\) is sufficiently large while keeping \(\beta/\ell^2\) fixed, we obtain
	\[
	\mathbb{P}[A(i,\tau,N,z)]\geq 1-\exp\{-Nc_p\},
	\]
	where \(c_p\) is a positive constant that is bounded away from \(0\) and depends only on \(d\), \(\eta\) and the ratio \(\beta/\ell^2\).
\end{lemma}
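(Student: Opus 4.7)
My plan is to exploit the fact that once we condition on having $N$ particles in the super cube at time $\tau\beta+T$, these $N$ particles perform independent random walks during $[\tau\beta+T,(\tau+1)\beta]$. So if I can show that each single walk has probability at least some constant $c_p>0$ of ending up in the target sub-cube at time $(\tau+1)\beta$, uniformly over its starting position in the super cube, then by independence
\[
\mathbb{P}[A(i,\tau,N,z)^c]\leq (1-c_p)^N\leq \exp\{-Nc_p\},
\]
which is precisely what is claimed. So the whole content of the lemma reduces to producing such a uniform single-particle lower bound.

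For this, set $s:=\beta-T=\beta-\ell^{5/3}$. Because the ratio $\beta/\ell^2$ is held fixed and $T=\ell^{5/3}=o(\ell^2)$, we have $s=\Theta(\ell^2)$, with the implicit constants depending only on $\beta/\ell^2$. For any starting vertex $x$ in the super cube $\prod_{j=1}^d[(i_j-\eta)\ell,(i_j+\eta+1)\ell]$ and any target vertex $y$ in the target sub-cube $\prod_{j=1}^d[(i_j+z_j)\ell,(i_j+z_j+1)\ell]$, we have $|x-y|\leq (2\eta+1)\sqrt{d}\,\ell$, so $|x-y|^{3/2}=O(\ell^{3/2})=o(s)$; the hypothesis $|x-y|^{3/2}\leq s$ of the Gaussian lower bound in \Cref{prop:heat_kernel_bounds} is therefore satisfied for all large enough $\ell$ (and $R_1=1$ since $S\equiv 1$ under (\ref{eq:mu_bounds_new})). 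Combining the lower bound of \Cref{prop:heat_kernel_bounds} with the uniform ellipticity (\ref{eq:mu_bounds_new}),
\[
\mathbb{P}_x[Y_s=y]=\mu_y q_s(x,y)\geq C_M^{-1}c_3 s^{-d/2}\exp\{-c_4|x-y|^2/s\}\geq c(d,\eta,\beta/\ell^2)\,\ell^{-d},
\]
for a positive constant $c(d,\eta,\beta/\ell^2)$ that depends only on the listed quantities. Summing this bound over the $\ell^d$ vertices $y$ in the target sub-cube yields a lower bound $c_p>0$ on the probability that a single walk ends in that sub-cube, where $c_p$ depends only on $d$, $\eta$ and the ratio $\beta/\ell^2$ (and on the fixed ellipticity constant $C_M$).

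There is no substantive obstacle here: everything reduces to applying the heat kernel lower bound on the spatial scale $\ell$ at the diffusive time scale $s\asymp\ell^2$, which is exactly the regime it was designed for. The only bookkeeping is to verify that $T=\ell^{5/3}$ is negligible compared to $\beta\asymp \ell^2$ so that $s\asymp\ell^2$, and that $|x-y|^{3/2}=O(\ell^{3/2})$ is negligible compared to $s$, both of which hold for $\ell$ sufficiently large while $\beta/\ell^2$ remains fixed, as stipulated by the hypothesis.
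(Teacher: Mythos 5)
Your proposal is correct and follows essentially the same route as the paper: both reduce the claim to a uniform single-particle lower bound $p_{\beta-T}\geq c_p>0$ via the independence/binomial argument, verify the hypothesis $|x-y|^{3/2}\leq\beta-T$ of the Gaussian lower bound in \Cref{prop:heat_kernel_bounds}, and then sum the resulting $\Theta(\ell^{-d})$ per-vertex bound over the $\ell^d$ vertices of the target sub-cube. The only cosmetic difference is that you make the bookkeeping $s=\beta-T=\Theta(\ell^2)$ and $T=o(\ell^2)$ fully explicit, while the paper absorbs it into the inequality $\beta-T\geq\beta/2$.
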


\begin{proof}
	Let \(Q^*=\prod_{j=1}^d[(i_j-\eta)\ell,(i_j+\eta+1)\ell]\) and \(Q^{**}=\prod_{j=1}^d[(i_j+z_j)\ell,(i_j+z_j+1)\ell]\). For \(t^{2/3}\geq\sup_{\substack{x\in Q^*\\y\in Q^{**}}}\|x-y\|_1\), define \(p_t:=\inf_{\substack{x\in Q^*}}\sum_{y\in Q^{**}}\mathbb{P}_x[Y_t=y]\). Then, if we define \(\mathrm{bin}(N,p_t)\) to be a binomial random variable of parameters \(N\in\mathbb{N}\) and \(p_t\in[0,1]\), it directly follows that
	\[
		\mathbb{P}[A(i,\tau,N,z)]\geq\mathbb{P}[\mathrm{bin}(N,p_t)\geq1]\geq1-\exp\{-Np_t\}.
	\]

	It remains to show that for \(t=\beta-T\), we have that \(p_t\geq c_p>0\) for some constant \(c_p\). We will use the heat kernel bounds for the pair \(x,y\), which hold if \(\|x-y\|_1^{3/2}\leq\beta-T\) for all \(x\in Q^*,y\in Q^{**}\). Given the ratio \(\beta/\ell^2\), \(d\) and \(\eta\), this is satisfied if \(\ell\) is large enough. Then we have that
	\begin{align*}
		p_{\beta-T}&=\inf_{x\in Q^*}\sum_{y\in Q^{**}}\mathbb{P}_x[Y_{\beta-T}=y]\\
		&\geq\inf_{x\in Q^*}C_{M}^{-1}\sum_{y\in Q^{**}}q_{{\beta-T}}(x,y)\\
		&\geq\inf_{x\in Q^*}C_{M}^{-1}\sum_{y\in Q^{**}}c_1 \beta^{-d/2}\exp\left\{-c_2\frac{\|x-y\|_1^2}{{\beta-T}}\right\}.
	\end{align*}
	Now we use that \(x\) and \(y\) can be at most \(c_\eta\ell\) apart where \(c_\eta\) is a constant depending on \(d\) and \(\eta\) only, and that \(\beta-T\geq\beta/2\) for \(\ell\) large enough. Hence,
	\begin{align*}
		p_{{\beta-T}}&\geq \inf_{x\in Q^*}C_M^{-1}\sum_{y\in Q^{**}}c_1 \beta^{-d/2}\exp\left\{-c_2\frac{2(c_\eta\ell)^2}{\beta}\right\}\\
		&= C_M^{-1}c_1\ell^d\left(\frac{1}{\beta}\right)^{d/2}\exp\left\{-c_2\frac{2(c_\eta\ell)^2}{\beta}\right\}\\
		&\geq c_p.
	\end{align*}
\end{proof}

In the next lemma, we will tie together the results from \Cref{lemma:particles2} and \Cref{lemma:particles1}. In order to precisely describe the behavior of the particles involved, we say a particle \(x\) \emph{collides} with particle \(y\) during a time interval \([t_0,t_1]\), if for at least one \(t\in[t_0,t_1]\), \(x\) and \(y\) are at the same site.

\begin{lemma}\label{prop:event}
	Consider the super cell \((i,\tau)\). Assume that at each site \(x\in \prod_{j=1}^d[(i_j-\eta)\ell,(i_j+\eta+1)\ell]\) the number of particles at \(x\) at time \(\tau\beta\) is a Poisson random variable of intensity \(\frac{\lambda_0}{2}\mu_x\), and let \(\Upsilon\) be the collection of such particles. Assume that, at time \(\tau\beta\), there is at least one infected particle \(x_0\) inside \(\prod_{j=1}^d[i_j\ell,(i_j+1)\ell]\). Let \(E_{\mathrm{st}}(i,\tau)\) be the event that at time \((\tau+1)\beta\), for all \(i'\in\mathbb{Z}^d\) with \(\|i-i'\|_{\infty}\leq\eta\), there is at least one particle  from \(\Upsilon\) in \(\prod_{j=1}^d[(i_j')\ell,(i_j'+1)\ell]\) that collided with \(x_0\) during \([\tau\beta,\tau\beta+T]\). If \(\ell\) is sufficiently large for Lemmas \ref{lemma:particles2} and \ref{lemma:particles1} to hold,
	then there exists a positive constant \(C\) such that
	\[
		\mathbb{P}[E_{\mathrm{st}}(i,\tau)]\geq1-\exp\{-C\lambda_0\ell^{1/3}\}.
	\]
\end{lemma}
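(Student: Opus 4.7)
The plan is to combine Lemmas \ref{lemma:particles2} and \ref{lemma:particles1} with an exit-time estimate for the infected particle $x_0$, via a three-step conditioning argument. Setting $Q^* = \prod_{j=1}^d[(i_j-\eta)\ell,(i_j+\eta+1)\ell]$ and $Q^{\circ} = \prod_{j=1}^d[(i_j-\eta+1)\ell,(i_j+\eta)\ell]$, I would introduce three ``good'' sub-events whose intersection implies $E_{\mathrm{st}}(i,\tau)$: event (i), the path $\rho$ of $x_0$ stays inside $Q^{\circ}$ throughout $[\tau\beta,\tau\beta+T]$; event (ii), the collection $\Upsilon'$ of particles in $\Upsilon$ which do not leave $Q^*$ during $[\tau\beta,\tau\beta+T]$ and collide with $\rho$ has cardinality at least $N := \lceil \tfrac{1}{2}C_1\lambda_0\ell^{1/3}\rceil$; and event (iii), for every $i'$ with $\|i-i'\|_{\infty} \le \eta$, at least one particle of $\Upsilon'$ sits in the subcube indexed by $i'$ at time $(\tau+1)\beta$. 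It suffices to bound the failure probability of each event by $\exp\{-C\lambda_0\ell^{1/3}\}$ for a suitable constant $C$ and all $\ell$ large enough.

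Event (i) is controlled directly by Proposition \ref{prop:exit}: since (\ref{eq:mu_bounds_new}) forces $S \equiv 1$ via Proposition \ref{prop:A}, I can apply the exit estimate to $x_0$ with $r$ of order $(\eta-1)\ell$ and $t = T = \ell^{5/3}$, which yields a bound of the form $c_3\exp\{-c_4\ell^{1/3}\}$. For event (ii), I would condition on (i) and on the realized path $\rho$; since $x_0$ moves independently of the ambient particle field, the remaining particles still form a Poisson field of intensity $\tfrac{\lambda_0}{2}\mu_x$ on $Q^* \setminus \{\rho(\tau\beta)\}$, matching the hypothesis of Lemma \ref{lemma:particles2}. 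That lemma then gives that $|\Upsilon'|$ is Poisson-distributed with mean at least $C_1\lambda_0\ell^{1/3}$, and a standard Chernoff bound yields $\mathbb{P}[|\Upsilon'| < N \mid \text{(i)}] \le \exp\{-c\lambda_0\ell^{1/3}\}$.

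For event (iii), conditional on (i) and (ii) there are at least $N$ already-infected particles located somewhere in $Q^*$ at time $\tau\beta+T$. By the strong Markov property, from that moment onward each such particle evolves as an independent simple random walk starting from its current position, so I may apply Lemma \ref{lemma:particles1} with this value of $N$: for each fixed $i'$ with $\|i-i'\|_{\infty}\le\eta$, the probability that no particle from $\Upsilon'$ lands in the subcube indexed by $i'$ at time $(\tau+1)\beta$ is at most $\exp\{-Nc_p\}$, and a union bound over the at most $(2\eta+1)^d$ admissible indices $i'$ yields $\mathbb{P}[\text{(iii) fails} \mid \text{(i),(ii)}] \le (2\eta+1)^d\exp\{-c_pN\}$. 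Combining the three bounds and choosing $C > 0$ small enough depending on $c_p$, $C_1$, $\eta$ and $d$ gives the claimed estimate. Since all the analytic work is already packaged in the two preceding lemmas and in Proposition \ref{prop:exit}, no serious obstacle remains; the only delicate point is the bookkeeping around the conditioning scheme, namely that conditioning on the path $\rho$ preserves the Poisson structure of the ambient field outside $\rho(\tau\beta)$ (immediate by independence of $x_0$ from the field) and that after conditioning on the collision events defining $\Upsilon'$ the post-$T$ motion of its particles is an independent random walk (which is exactly the strong Markov property).
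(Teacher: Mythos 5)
Your proposal is correct and follows essentially the same route as the paper: the paper defines three events $F_1$, $F_2$, $F_3$ that coincide with your (i), (ii), (iii), controls $F_1$ by \Cref{prop:exit}, $F_2$ by \Cref{lemma:particles2} plus a Chernoff bound, and $F_3$ by \Cref{lemma:particles1} with a union bound over the $(2\eta+1)^d$ subcubes. The only cosmetic difference is that you let the definition of $\Upsilon$ in \Cref{lemma:particles2} do the work of keeping the collided particles inside $Q^*$ at time $\tau\beta+T$, whereas the paper re-applies \Cref{prop:exit} for this; both versions yield the same order of error.
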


\begin{proof}
	We note that, by definition, the event \(E_{\mathrm{st}}(i,\tau)\) is restricted to the super cube \(\prod_{j=1}^d[(i_j-\eta)\ell,(i_j+\eta+1)\ell]\) and time interval \([\tau\beta,(\tau+1)\beta]\).
	We define the following 3 events.
	\begin{itemize}
		\item[\(F_1\):] The initial infected particle \(x_0\) never leaves \(\prod_{j=1}^d[(i_j-\eta+1)\ell,(i_j+\eta-1)\ell]\) during \([\tau\beta,\tau\beta+~T]\).
		\item[\(F_2\):] Let \(C_1\) be the constant from \Cref{lemma:particles2}. During the time interval \([\tau\beta,\tau\beta+T]\) the initial infected particle \(x_0\) collides with at least \(\frac{C_1\lambda_0\ell^{1/3}}{2}\) different particles from \(\Upsilon\) that are in the supercube \(Q^{**}=\prod_{j=1}^d[(i_j-\eta)\ell,(i_j+\eta+1)\ell]\) at time \(\tau\beta+T\).
		\item[\(F_3\):] Out of the \(\frac{C_1\lambda_0\ell^{1/3}}{2}\) or more particles from \(F_2\), at least one of them is in the cube \(\prod_{j=1}^d[(i_j+k_j)\ell,(i_j+k_j+1)\ell]\) at time \((\tau+1)\beta\), for all \(k=(k_1,\dots,k_d)\) for which \(\prod_{j=1}^d[(i_j+k_j)\ell,(i_j+k_j+1)\ell]\subset Q^{**}\).
	\end{itemize}
	By definition of the events, we clearly have that \(\mathbb{P}[E_{\mathrm{st}}(i,\tau)]\geq \mathbb{P}[F_1\cap F_2\cap F_3]\).

Using \Cref{prop:exit} we have
	\begin{equation}\label{for:F1}
		\mathbb{P}[F_1]\geq 1-C_2\exp\{-C_3\ell^2/T\}=1-C_2\exp\{-C_3\ell^{1/3}\}
	\end{equation}
	for some positive constants \(C_2\) and \(C_3\). We observe that \(F_1\) is restricted to the super cube \(\prod_{j=1}^d[(i_j-\eta)\ell,(i_j+\eta+1)\ell]\) and the time interval \([\tau\beta,\tau\beta+T]\).

	For the event \(F_2\), we apply \Cref{lemma:particles2} to get that the intensity of the Poisson point process of particles that are in \(Q^{**}\) at time \(\tau\beta\) and collide with \(x_0\) during \([\tau\beta,\tau\beta+T]\) is at least \(\lambda_0 C_1\ell^{1/3}\) for some positive constant \(C_1\). Since every particle that collides with \(x_0\) enters \(\prod_{j=1}^d[(i_j-\eta+1)\ell,(i_j+\eta)\ell]\) during \([\tau\beta,\tau\beta+T]\), we can use \Cref{prop:exit} to bound the probability that the particle is inside of \(Q^{**}\) at time \(\tau\beta+T\) by
	\[
		1-C_a\exp\left\{-\frac{C_b\ell^2}{T}\right\}=1-C_a\exp\{-C_b\ell^{1/3}\},
	\]
	for some positive constants \(C_a\) and \(C_b\). This term can be made as close to \(1\) as possible by having \(\ell\) sufficiently large. We assume \(\ell\) is large enough so that this term is larger than \(2/3\). This gives that the intensity of the process of particles from \(\Upsilon\) that collided with \(x_0\) during \([\tau\beta,\tau\beta+T]\) and are in \(Q^{**}\) at time \(\tau\beta+T\) is at least 
	\[
		\frac{2\lambda_0 C_1\ell^{1/3}}{3}.
	\]
	Using Chernoff's bound (see \Cref{lem:chernoff}) we have that
	\begin{equation}\label{for:F2}
		\mathbb{P}[F_2]\geq 1-\exp\{-(2/3)^2C_1\lambda_0\ell^{1/3}\}.
	\end{equation}
	Note that, by construction, \(F_2\) is restricted to the super cube \(Q^{**}\) and the time interval \([\tau\beta,\tau\beta+T]\). Furthermore, \(F_2\) is clearly an increasing event. 
	
	We now turn to \(F_3\). Using \Cref{lemma:particles1}, and a uniform bound across the number of cubes inside a super cube, we have that 
	\begin{equation}\label{for:F3}
		\mathbb{P}[F_3]\geq 1-(2\eta+1)^d\exp\{-\frac{C_1\lambda_0\ell^{1/3}}{2}c_p\},
	\end{equation}
	where \(c_p\) is a small but positive constant. Again, the event is restricted to the super cube \(Q^{**}\) and the time interval \([\tau\beta+T,(\tau+1)\beta]\) and is an increasing event. Taking the product of the probability bounds in (\ref{for:F1}), (\ref{for:F2}) and (\ref{for:F3}), we see that the probability that \(E_{\mathrm{st}}(i,\tau)\) holds is at least
	\begin{align*}
		1-	\exp\{-C\lambda_0\ell^{1/3}\}
	\end{align*}
	for some constant \(C\) and all large enough \(\ell\).
\end{proof}

\begin{proof}[Proof of \Cref{thrm:spread}]\label{proof1}
	We start by using \Cref{thrm:exp_tail}. Set \(\eta\in\mathbb{N}\) such that \(\eta\geq d\) and set \(\epsilon=1/2\). Fix the ratio \(\beta/\ell^2\) small enough so that the lower bound for \(w\) is at most \(2\eta+1\), and then set \(w=2\eta+1\).
	Assume \(\ell\) is large enough so that \Cref{prop:event} holds. 

	For each \((i,\tau)\in\mathbb{Z}^{d+1}\), define \(E_{\mathrm{st}}(i,\tau)\) as in \Cref{prop:event}. This event is increasing in the number of particles, is restricted to the super cube \(i\) and time interval \([\tau\beta,(\tau+1)\beta]\), and satisfies
	\[
		\mathbb{P}[E_{\mathrm{st}}(i,\tau)]\geq 1-\exp\{-C\lambda_0\ell^{1/3}\},
	\]
	for some constant \(C\). Hence, letting \(\lambda/2\) stand for the measure \(\frac{\lambda}{2}(x)=\frac{\lambda_0\mu_x}{2}\), we have
	\[
		\log\left(\frac{1}{1-\nu_{E_{\mathrm{st}}}(\tfrac{\lambda}{2},Q_{(2\eta+1)\ell},Q_{(2\eta+1)\ell},\eta\beta)}\right)\geq C\lambda_0\ell^{1/3},
	\]
	which increases with \(\ell\), as does the term \(\epsilon^2\lambda_0\ell^d\) in the condition of \Cref{thrm:exp_tail}.
	Thus, setting \(\ell\) large enough, we apply \Cref{thrm:exp_tail} which gives the existence of a two-sided Lipschitz surface \(F\), on which the event \(E_{\mathrm{st}}(i,\tau)\) holds. We also get that the surface is almost surely finite and that it surrounds the origin.
	
	We now proceed to argue that the existence of the surface \(F\) implies that the infection spreads with positive speed.
	Since the two-sided Lipschitz surface \(F\) is finite and surrounds the origin, we have that in almost surely finite time, an infected particle started from the origin will enter some cube \(\prod_{j=1}^d[i_j\ell,(i_j+1)\ell]\) for which \((i,\tau)\) is in \(F\). We call this the \emph{central cube} of \((i,\tau)\). Once that holds, the starting assumption of \(E_{\mathrm{st}}(i,\tau)\) from \Cref{prop:event} is satisfied for the super cell \((i,\tau)\), and the event \(E_{\mathrm{st}}(i,\tau)\) holds. By the definition of \(E_{\mathrm{st}}(i,\tau)\) this means that the initial infected particle for the super cell \((i,\tau)\) infects a large number of other particles, which spread the infection to the central cube of \((i',\tau+1)\) for all \(i'\in\mathbb{Z}^d\) such that \(\|i'-i\|_{\infty}\leq \eta\).
	
	Let \((b,h)\) be the base-height index of the cell \((i,\tau)\in F\). Recall that \(h\) is one of the spatial dimensions. We will also select one of the \(d-1\) spatial dimensions from \(b\) and denote it \(b_1\). Let \(b'\in\mathbb{Z}^d\) be obtained from \(b\) by increasing the time dimension from \(\tau\) to \(\tau+1\), and by increasing the chosen spatial dimension from \(b_1\) to \(b_1+1\). Since \(\|b-b'\|_1=2\), we can choose \(h'\in\mathbb{Z}\) such that \((b',h')\in F\) and \(|h-h'|\leq 2\), where the latter holds by the Lipschitz property of \(F\). Therefore, there must exists \(i'\in\mathbb{Z}^d\) such that \((i',\tau+1)\) is the space-time super cell corresponding to \((b',h')\) and \(\|i-i'\|_{\infty}\leq 1\). Hence, at time \((\tau+1)\beta\), there is an infected particle in the central cube of the super cell \(i'\).
	
	We can then recursively repeat this procedure for the super cell \((i',\tau+1)\), since \(E_{\mathrm{st}}(i',\tau+1)\) holds. Repeating this process we obtain that the infection spreads by a distance of at least \(\ell\) in time \(\beta\) in the chosen spatial direction. Consequently
	\[
		\liminf_{t\rightarrow\infty}\frac{\|I_t\|_1}{t}>0\quad \textrm{almost surely}.
	\]
\end{proof}

In order to prove \Cref{thrm:spread_recov}, we can follow the same steps as in the proof of \Cref{thrm:spread} with the additional consideration that we have to ensure that the relevant infected particles do not recover too quickly. For that, we will require that all the particles involved do not recover for at least \(\beta\).

\begin{proof}[Proof \Cref{thrm:spread_recov}]
	Recall the definition of \(\Upsilon\) and \(\rho\) from \Cref{lemma:particles2} and of \(E_{\mathrm{st}}(i,\tau)\) from \Cref{prop:event}. Let \(E_{\mathrm{st}}'(i,\tau)\) be the event that \(E_{\mathrm{st}}(i,\tau)\) holds, and that the particles in \(\Upsilon\) and the initial infected particle whose path is \(\rho\) do not recover during \([\tau\beta,(\tau+1)\beta]\). Since each such particle does not recover during \([\tau\beta,(\tau+1)\beta]\) with probability \(\exp\{-\gamma\beta\}\), for \Cref{lemma:particles2} we consider that for each \(x\in Q^{*}\setminus\rho(\tau\beta)\) the number of particles at \(x\) at time \(\tau\beta\) that do not recover during \([\tau\beta, (\tau+1)\beta]\) is a Poisson random variable of intensity \(\frac{\lambda_0}{2}\mu_x\exp\{-\lambda\beta\}\). Thus,
	 once \(\eta\), \(\beta\) and \(\ell\) are fixed, setting \(\gamma\) small enough gives that \(E'_{\mathrm{st}}(i,\tau)\) holds with probability at least 
	\[
		1-(1-\exp\{-\gamma\beta\})-\exp\{-C\lambda_0\exp\{-\gamma\beta\}\ell^{1/3}\}
	\]
	for some positive constant \(C\), where the term inside the parenthesis accounts of the probability that the initial infected particles recovers during \([\tau\beta,(\tau+1)\beta]\).
	We now follow the same steps as in the proof of \Cref{thrm:spread} to get that the two-sided Lipschitz surface \(F\) on which the increasing event \(E'_{\mathrm{st}}(i,\tau)\) holds exists, is finite and surrounds the origin almost surely. This gives that an initially infected particle that is at the origin at time \(0\) has a strictly positive probability of surviving long enough to enter a cell of the two-sided Lipschitz surface. Once on the surface, the infection survives indefinitely by the definition of \(E'_{\mathrm{st}}(i,\tau)\). Hence
	\[
	\mathbb{P}\left[\|I_t\|_1\geq c_1t\textrm{ for all }t\geq0\right]\geq c_2.
	\]
\end{proof}

\appendix
\section{Appendix: Standard large deviation results}
\begin{lemma}[Chernoff bound for Poisson]\label{lem:chernoff}
Let \(P\) be a Poisson random variable with mean \(\lambda\). Then, for any \(0<\epsilon<1\),
\[
	\mathbb{P}[P<(1-\epsilon)\lambda] < \exp\{-\lambda\epsilon^2/2\}
\]
and
\[
	\mathbb{P}[P > (1 + \epsilon)\lambda] < \exp\{-\lambda\epsilon^2/4\}.
\]
\end{lemma}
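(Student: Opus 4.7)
The plan is to apply the standard Chernoff exponential Markov inequality, using the explicit moment generating function $\mathbb{E}[e^{sP}]=\exp\{\lambda(e^s-1)\}$ of a Poisson random variable. Each of the two tail bounds reduces to optimizing a one-parameter exponential bound in $s$ and then verifying an elementary Taylor-series inequality in $\epsilon$.

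For the lower tail, I would apply Markov's inequality to $e^{-tP}$ for $t>0$ to obtain
$$\mathbb{P}[P<(1-\epsilon)\lambda]\leq e^{t(1-\epsilon)\lambda}\,\mathbb{E}[e^{-tP}]=\exp\bigl\{\lambda\bigl[(1-\epsilon)t+e^{-t}-1\bigr]\bigr\}.$$
Minimizing over $t>0$ leads to the choice $t=-\log(1-\epsilon)$, simplifying the bound to $\exp\{-\lambda[(1-\epsilon)\log(1-\epsilon)+\epsilon]\}$. Expanding $\log(1-\epsilon)=-\sum_{k\geq 1}\epsilon^k/k$ and rearranging gives the identity
$$(1-\epsilon)\log(1-\epsilon)+\epsilon=\sum_{k\geq 2}\frac{\epsilon^k}{k(k-1)},$$
which is a non-negative power series with leading term $\epsilon^2/2$; the desired lower-tail bound follows immediately.

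For the upper tail I would apply Markov's inequality to $e^{tP}$ with $t>0$, producing the bound $\exp\{\lambda[e^t-1-(1+\epsilon)t]\}$, and optimize by setting $t=\log(1+\epsilon)$. The resulting exponent is $-\lambda[(1+\epsilon)\log(1+\epsilon)-\epsilon]$, and the same manipulation yields
$$(1+\epsilon)\log(1+\epsilon)-\epsilon=\sum_{k\geq 2}\frac{(-1)^k\epsilon^k}{k(k-1)}=\frac{\epsilon^2}{2}-\frac{\epsilon^3}{6}+\frac{\epsilon^4}{12}-\cdots.$$
The main (and really only) mildly delicate point is this last step: unlike the lower-tail series, the signs alternate, so one must justify termwise that, because the absolute values $\epsilon^k/[k(k-1)]$ are monotonically decreasing for $0<\epsilon\leq 1$, the sum is bounded below by the first two terms, giving $\epsilon^2/2-\epsilon^3/6=\epsilon^2(3-\epsilon)/6\geq \epsilon^2/3$, safely above the required threshold $\epsilon^2/4$. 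Everything else is routine Chernoff bookkeeping.
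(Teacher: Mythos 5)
The paper states this lemma in the appendix as a standard result and gives no proof, so there is nothing internal to compare against; your argument is a complete, correct proof. Both tail bounds are derived by the standard exponential Markov inequality optimized at $t=\mp\log(1\mp\epsilon)$, the power-series identities for the rate functions are exact, and the alternating-series remainder estimate for the upper tail is valid since $\epsilon^k/[k(k-1)]$ decreases in $k$ on $(0,1)$; in fact your argument yields the slightly sharper exponent $\epsilon^2/3$, so the stated constant $\epsilon^2/4$ is comfortably covered.
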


\bibliographystyle{plain}\addcontentsline{toc}{section}{References}
\bibliography{library}
\end{document}